\begin{document}

\newcommand{\s}{\sigma}
\renewcommand{\k}{\kappa}
\newcommand{\p}{\partial}
\newcommand{\D}{\Delta}
\newcommand{\om}{\omega}
\newcommand{\Om}{\Omega}
\renewcommand{\phi}{\varphi}
\newcommand{\e}{\epsilon}
\renewcommand{\a}{\beta}
\renewcommand{\b}{\beta}
\newcommand{\N}{{\mathbb N}}
\newcommand{\R}{{\mathbb R}}
   \newcommand{\eps}{\varepsilon}
   \newcommand{\EX}{{\Bbb{E}}}
   \newcommand{\PX}{{\Bbb{P}}}

\newcommand{\cF}{{\cal F}}
\newcommand{\cG}{{\cal G}}
\newcommand{\cD}{{\cal D}}
\newcommand{\cO}{{\cal O}}

\newcommand{\de}{\delta}

\newcommand{\grad}{\nabla}
\newcommand{\n}{\nabla}
\newcommand{\curl}{\nabla \times}
\newcommand{\dive}{\nabla \cdot}

\newcommand{\ddt}{\frac{d}{dt}}
\newcommand{\la}{{\lambda}}

\newtheorem{theorem}{Theorem}
\newtheorem{lemma}{Lemma}
\newtheorem{remark}{Remark}
\newtheorem{example}{Example}
\newtheorem{definition}{Definition}
\newtheorem{corollary}{Corollary}

\def\proof{\mbox {\it Proof.~}}

\title{Stochastic Averaging Principle for Dynamical Systems with
Fractional Brownian Motion}

\author{Yong Xu$^1$, Rong Guo$^1$, Di Liu$^1$, Huiqing Zhang$^1$, Jinqiao Duan$^2$
\vspace{1mm}\vspace{1mm}\\
{\it\small 1.   Department of Applied Mathematics}\\
{\it\small  Northwestern Polytechnical University, }
 {\it\small
Xi'an, 710072, China }\\
{\it\small E-mail:  hsux3@nwpu.edu.cn; huiqingzhang@nwpu.edu.cn} \vspace{1mm}\\
{\it\small 2. Department of Applied Mathematics}\\
 {\it\small Illinois Institute of Technology},
 {\it\small Chicago, IL 60616, USA}\\
{\it\small E-mail: duan@iit.edu}  \vspace{1mm} }

\date{\today  }

\maketitle

\begin{abstract}

\noindent  Stochastic averaging for a class of stochastic differential equations (SDEs) with fractional Brownian motion, of the Hurst parameter $H$ in the interval $(\frac{1}{2},1)$, is investigated. An averaged SDE for the original SDE  is proposed, and their solutions are quantitatively compared. It is shown that the solution  of the averaged SDE converges to that of the original SDE in the sense of mean square and also in probability. It is further demonstrated that a similar averaging principle holds   for   SDEs under   stochastic integral of pathwise backward and forward types. Two examples are presented and numerical simulations are carried
out to illustrate the  averaging principle.

\end{abstract}
\noindent {\it \footnotesize Key words}. {\scriptsize correlated noise; Averaging
principle; Stochastic differential equations; \\ Stochastic calculus; Fractional Brownian motion.}

\noindent {\it \footnotesize 2000 Mathematics Subject
Classification}. {\scriptsize Primary: 34F05, 37H10, 60H10,
93E03}.




\setcounter{secnumdepth}{5} \setcounter{tocdepth}{5}

\makeatletter
    \newcommand\figcaption{\def\@captype{figure}\caption}
    \newcommand\tabcaption{\def\@captype{table}\caption}
\makeatother


\section{\bf Introduction}

Stochastic averaging is often   used  to   approximate
   dynamical systems under random fluctuations. This analytic technique  has been developed in the case of the Gaussian random fluctuations, for example, by Stratonovich \cite{Stratonovich1, Stratonovich2}  and then
by Khasminskii \cite{Khasminskii1, Khasminskii2}. It has been found to be effective
for understanding stochastic differential equations arising in many fields \cite{Namachchivaya, Roberts, Liptser}.  Zhu and his co-workers further studied this stochastic averaging method   for nonlinear systems under Poisson   noise  \cite{Zhu,Zeng,Jia}, and two of the present authors derived an averaging principle  for stochastic
differential equations with L\'{e}vy noise \cite{Xu}. In all of these mentioned works, the fluctuations or noises are uncorrelated, i.e., white noises.
\\
\mbox{}\quad However,  random fluctuations with long-range dependence,  or correlated noises, are abundant. They may be   modeled by fractional Brownian motion (fBm) with $ \frac{1}{2}< H < 1 $ (where $H$ is the Hurst index). The fractional Brownian motion was
introduced  by Kolmogorov  \cite{Kolmogorov}. Then, in 1968, Mandelbrot and Van Ness  \cite{Mandelbrot}  presented the structure of the fractional Brownian motion.
Due to the importance of long-range dependence of the fBm, the stochastic differential equations with fBm  have been used as the model of the practical problems in various fields, such as hydrology, queueing theory and mathematical finance (Chakravarti and Sebastian, \cite{Chakravarti}; Hu and {\O}ksendal, \cite{Hu};
Leland, Taqqu, Willinger, and Wilson et al, \cite{Leland}; Scheffer, \cite{Scheffer}). So fractional Brownian motion has also been suggested as a replacement of standard Brownian motion in several stochastic models (\cite{Norros, Shiryaev, Baillie}).\\

\mbox{}\quad Given the abundance of correlated fluctuations, it is crucial to understand  the behaviors of the stochastic differential equations with fBm \cite{Jumarie1, Jumarie2, Kou, Sliusarenko}.  Unfortunately, the fractional Brownian motion is neither a semi-martingale nor a Markov process, so the powerful tools for the stochastic integral theories are not applicable
when studying fBm. Therefore, much of the recent research on SDEs with fBm is by numerical simulations. Other techniques for such SDEs would be desirable. This motivates us to investigate   stochastic averaging techniques for differential equations
driven by fractional Brownian motion. \\

In the present paper, we   study a stochastic averaging technique for a class of  SDEs with fBm.   We present an averaging principle, and prove that the original stochastic differential equation  can be approximated by an averaged stochastic differential equation  in the sense of mean square convergence and convergence in probability, when a scaling parameter tends to zero. In addition, the similar conclusion holds for a SDE, where the stochastic differential or stochastic integral is of forward and backward types.


\mbox{}\quad The organization of the paper is as follows. Section 2 recalls the
definition of fractional Brownian motion  and highlight the
differences with the usual Brownian motion roughly, and then briefly reviews the symmetric, forward and backward stochastic integrals with respect to fBm.
Section 3 is devoted to prove a stochastic averaging principle for stochastic differential equations
  with fBm.   Section 4   presents two examples to illustrate the stochastic averaging principle.


\section{Fractional Brownian motion and stochastic integration}

Since stochastic differential equations are interpreted via stochastic integrals, it is necessary to specify the integration with respect to fBm. For background on this issue, see  \cite{Dai, Decreusefond, Feyel, Carmona, Nualart1}. For instance, using the notions of fractional integral and derivative, it is appropriate to  introduce  a pathwise stochastic integral with respect to fBm  \cite{Young, Zahle, Russo}.


In this preliminary section, we  briefly recall the definition of fBm  and   the integration with respect to it, for $H \in (\frac12, 1)$.

\subsection{Fractional Brownian motion}
Let $(\Omega,\mathscr{F},P)$ be a complete probability space. The definition of the fractional Brownian motion is as follows\cite{Mandelbrot}.

\begin{definition}
The fractional Brownian motion ($B^H(t)$) with Hurst index $H$ is a centered self-similar Gaussian process
$ B^H={B^H(t), t\in \mathbb{R_+} }$, on $ (\Omega,\mathscr{F},P) $ with the properties :\\
 (1)  $ B^H(0)=0 $ ;\\
 (2)  $ \mathbb{E}{B^H(t)}=0, t \in \mathbb{R_+} $ ;\\
 (3)  $ \mathbb{E}{B^H(t)B^H(s)} = \frac{1}{2}(|t|^{2H} + |s|^{2H} - |t-s|^{2H}), t, s \in \mathbb{R_+} $.\\
\end{definition}
For $ H = \frac{1}{2} $, this is the usual Brownian motion.

We also recall the following features of the fractional Brownian motion:\\

{\bf (a)} Self-similarity : For every constant $ a > 0 $ and every $ T > 0 $,   the following relation about distribution (or law) holds
\begin{eqnarray*}
       Law\quad (B^H(at):t \in [0,T]) = Law\quad (a^H B^H(t):t \in [0,T]).
\end{eqnarray*}
The above formula means that the two processes $ B^H(at) $ and $ a^H B^H(t) $ have the same finite-dimensional distribution functions, i.e., for
every choice of $ t_0, \ldots, t_n \in \mathbb{R_+} $,
\begin{eqnarray*}
 P(B^H(at_0) \leq x_0, \ldots, B^H(at_n) \leq x_n) = P(a^H B^H(t_0) \leq x_0, \ldots, a^H B^H(t_n) \leq x_n),
\end{eqnarray*}
 for every $ x_0, \ldots, x_n \in \mathbb{R} $.\\

{\bf (b)} Stationary increments : The increment of this process in  $ (s,t) $ has a normal distribution with zero mean, and the following variance
\begin{eqnarray*}
 \mathbb{E}{(B^H(t)-B^H(s))^2} = |t-s|^{2H}.
\end{eqnarray*}
Hence, for every integer $ k \geq 1 $,
\begin{eqnarray*}
 \mathbb{E}{(B^H(t)-B^H(s))^{2k}} = \frac{2k!}{k!2^k}|t-s|^{2Hk}.
\end{eqnarray*}
In other words, the parameter $H$ controls the regularity of the trajectories. For $ H = \frac{1}{2} $, the increments of the process in disjoint intervals are
independent, while for $ H \neq \frac{1}{2} $, the increments  are dependent.\\

{\bf (c)} Long-range dependence : The auto-covariance function  $ \rho_H(n),n \in N $ of the fBm is
\begin{eqnarray*}
\quad\quad \quad \rho_H(n): =Cov(B^H (k)-B^H(k-1),B^H(k+n)-B^H(k+n-1))\\
= \frac{1}{2}[(n+1)^{2H}+(n-1)^{2H}-2n^{2H}]\\
\approx H (2H-1)n^{2H-2},
\end{eqnarray*}
and
$ \rho_H(n) \longrightarrow 0 $, as n tends to infinity.\\
\mbox{}\quad If $ H > \frac{1}{2}, \;  \rho_H(n) > 0 $, for $n$ large enough,  and $ \sum_{n=1}^\infty \rho_H(n) = \infty $. In this case, we say that the fractional Brownian motion has long-range dependence.
So the fBm can be used to describe cluster phenomena, occuring in geophysics, hydrology and economics.\\
\mbox{}\quad Based on the definition of the fractional Brownian motion, it is clear  that the standard Brownian motion is a specific fractional Brownian motion with index $ H = 1/2 $. \\
\mbox{}\quad The relationship between the usual Brownian motion and fractional Brownian motion is as follows:\\
{(R1)} The similarities :
They are both Gaussian process; they do not have differentiable sample paths and both have statistical self-similarity; besides they are almost everywhere H\"{o}lder continuous.\\
{(R2)} The differences : Fractional Brownian motion is neither a semi-martingale nor a Markov process (for $ H \neq \frac{1}{2}$), but the usual Brownian motion is a semi-martingale and a Markov process; fractional Brownian motion has no independent increments, while the usual Brownian motion has.

\subsection{Stochastic integration  with  respect  to     fractional Brownian motion}

For the convenience of   readers, we recall   some  stochastic integration with respect to the fractional Brownian motion   \cite{Biagini, Duncan, Mishura}.\\
Let $ \phi : \mathbb{R}_{+} \times \mathbb{R}_{+}  \longrightarrow \mathbb{R}_{+} $ be given by\\
\begin{eqnarray*}
 \phi(t,s) = H (2H-1)|t-s|^{2H-2}, t,s \in \mathbb{R}_{+},
\end{eqnarray*}\\
where $ \frac{1}{2}<H<1 $, and
let $ f : \mathbb{R}_{+} \longrightarrow \mathbb{R}_{+} $  be Borel measurable.  Define
\begin{eqnarray*}
L_\phi^2=\{f:|f|_\phi^2  = \int_\mathbb{R} \int_\mathbb{R} f(t)f(s) \phi (t,s) dsdt < \infty \}.
\end{eqnarray*}\\
The Hilbert space $ L_\phi^2 $ is naturally associated with the Gaussian process $ (B^H (t),t\geq 0)$.\\
Let $\mathcal {S}$ be the set of smooth and cylindrical random variables of the form
\begin{eqnarray*}
F=f(B^H (\psi_1),B^H (\psi_2),\ldots,B^H(\psi_n)),
\end{eqnarray*}
where $ n \geq 1$, $ f \in \mathcal {C}_b^\infty (\mathbb{R}^n) $ (i.e., $f$ and all its partial derivatives are bounded), and $\psi_i \in \mathcal {H}$, $\mathcal {H}$ is a Hilbert space \cite{Nualart1}.\\\
Introduce    the Malliavin $ \phi-derivative $ of $ F$
\begin{eqnarray*}
D_t^\phi F = \int_\mathbb{R} \phi(t,\nu) D^H F d\nu,
\end{eqnarray*}
where
\begin{eqnarray*}
D^H F=\sum_{i=1}^n \frac{\partial f}{\partial x_i} (B^H(\psi_1),\ldots,B^H(\psi_n)) \psi_i.
\end{eqnarray*}

In this paper, we consider the pathwise stochastic integrals for fBm. The definition of the symmetric stochastic integral for the fBm case is in \cite{Biagini}.
\begin{definition}
\label{definition}
Let $ u(t)$ $(t\in [0,T])$ be a stochastic process with integrable trajectories. The symmetric integral of $ u(t) $ with respect to $ B^H(t)$ is defined as
\begin{eqnarray*}
\lim_{\epsilon \rightarrow 0} \frac{1}{2\epsilon}\int_0^T u(s) [B^H (s+\epsilon )-B^H (s-\epsilon)]ds,
\end{eqnarray*}
provided that the limit exists in probability, and is denoted by $ \int_0^T u(s)d^{\circ}B^H(s)$.
\end{definition}

\begin{remark}
Let $L(0,T)$ be the family of processes on $[0,T]$, such that $u(t) \in L(0,T)$ if  $\mathbb{E}|u(t)|_\varphi^2 <\infty$. Assume that $u(t)$ $(t\in [0,T])$ is a stochastic process  in $L(0,T)$ and satisfies
\begin{eqnarray*}
\int_0^T \int_0^T |D_s^H u(t)||t-s|^{2H-2}dsdt <\infty.
\end{eqnarray*}
Then the symmetric integral exists and the following relation holds:
\begin{equation}
\int_0^T u(t)d^\circ B^H(t)= \int_0^T u(t)\diamond dB^H(t) + \int_0^T D_s^\varphi u(s)ds,
\end{equation}
where $\diamond$ denotes the Wick product, $ H\in(\frac{1}{2},1)$.
\end{remark}

\begin{remark}
The definition of the forward and backward integrals with respect to  fBm is as follows:\\
 Let $u(t)$ $(t\in [0,T])$ be a process with integrable trajectories. 
The forward integral of $ u(t) $ with respect to $ B^H(t)$ is defined as
\begin{eqnarray*}
\lim_{\epsilon \rightarrow 0} \frac{1}{\epsilon}\int_0^T u(s) [\frac{B^H (s+\epsilon )-B^H (s)}{\epsilon}]ds,
\end{eqnarray*}
provided that the limit exists in probability, and is denoted by $ \int_0^T u(s)d^{-}B^H(s)$.

The backward integral is defined as
\begin{eqnarray*}
\lim_{\epsilon \rightarrow 0} \frac{1}{\epsilon}\int_0^T u(s) [\frac{B^H (s-\epsilon )-B^H (s)}{\epsilon}]ds,
\end{eqnarray*}
provided that the limit exists in probability, and is denoted by $ \int_0^T u(s)d^{+}B^H(s)$.
\end{remark}

\begin{remark}
According to \cite{Biagini}, under the assumptions in Remark $1$, the symmetric, backward and forward integrals coincide in the following sense
\begin{equation}
\int_0^T u(t)d^- B^H(t)= \int_0^T u(t)\diamond dB^H(t) + \int_0^T D_s^\varphi u(s)ds,
\end{equation}
\begin{equation}
\int_0^T u(t)d^+ B^H(t)= \int_0^T u(t)\diamond dB^H(t) + \int_0^T D_s^\varphi u(s)ds.
\end{equation}
\end{remark}

\section{\bf An averaging principle for   SDEs   with fBm}
\subsection{ Some Lemmas }

In order to present a stochastic averaging principle, we   need two lemmas.\\
\begin{lemma}
\label{lemma:}
Let $ B^H(t) $ be a fractional Brownian motion with $ \frac{1}{2}<H<1$, and $ Z(s) $ be a stochastic process in $ L[0,T]$ .
For every $ T < \infty $ , there exists a  constant $ C(H,T)= H T^{2H-1} $ such that
\begin{equation}
\mathbb{E}[(\int_0^T |Z(s)|\diamond dB^H(s))^2] \leq C(H,T)\mathbb{E}[\int_0^T |Z(s)|^2 ds] + C T^2.
\end{equation}
\end{lemma}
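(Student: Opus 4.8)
The plan is to start from the second-moment (isometry) identity for the Wick--Itô integral with respect to $B^H$, which is the natural tool once $\int_0^T |Z(s)|\diamond dB^H(s)$ is read as a divergence (Skorohod) integral. For $H\in(\tfrac12,1)$ this identity reads
\begin{equation*}
\mathbb{E}\left[\left(\int_0^T |Z(s)|\diamond dB^H(s)\right)^2\right] = \mathbb{E}\left[\int_0^T\!\!\int_0^T \phi(s,t)\,|Z(s)|\,|Z(t)|\,ds\,dt\right] + \mathbb{E}\left[\int_0^T\!\!\int_0^T D_s^\phi|Z(t)|\,D_t^\phi|Z(s)|\,ds\,dt\right],
\end{equation*}
with $\phi(s,t)=H(2H-1)|s-t|^{2H-2}$ as in Section~2. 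Splitting the square into these two pieces is what will produce the two summands on the right-hand side of the claimed inequality: the kernel term will give $C(H,T)\,\mathbb{E}[\int_0^T|Z(s)|^2\,ds]$, and the Malliavin-derivative term will be absorbed into the additive $CT^2$.

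For the first term I would symmetrize using the elementary bound $|Z(s)||Z(t)|\le \tfrac12(|Z(s)|^2+|Z(t)|^2)$; by the symmetry of $\phi$ this reduces the double integral to $\mathbb{E}[\int_0^T |Z(s)|^2\,g(s)\,ds]$, where $g(s)=\int_0^T\phi(s,t)\,dt$. A direct computation gives $g(s)=H\bigl(s^{2H-1}+(T-s)^{2H-1}\bigr)$, which is bounded on $[0,T]$ by a multiple of $T^{2H-1}$; this is the source of the factor $C(H,T)=HT^{2H-1}$ in the first summand. The one subtlety here is the use of the chain rule for Lipschitz functions to make sense of $D^H|Z|=\operatorname{sgn}(Z)\,D^H Z$, so that $|Z|$ lies in the domain of the divergence operator despite the non-smoothness of $|\cdot|$ at the origin.

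For the second term I would write $D_t^\phi|Z(s)|=\operatorname{sgn}(Z(s))\int_0^T\phi(t,\nu)\,D_\nu^H Z(s)\,d\nu$ and estimate $|D_t^\phi|Z(s)||\le \int_0^T\phi(t,\nu)\,|D_\nu^H Z(s)|\,d\nu$. Under the standing regularity hypotheses on $Z\in L[0,T]$ (Malliavin differentiability with a uniformly bounded $\phi$-derivative), the iterated integral $\int_0^T\int_0^T D_s^\phi|Z(t)|\,D_t^\phi|Z(s)|\,ds\,dt$ is controlled by a constant times the area $T^2$ of the integration square, which is exactly the $CT^2$ contribution.

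The main obstacle is this second term. One must (i) justify the Malliavin differentiability of $|Z|$ and the a.e.\ chain rule through the non-smooth map $x\mapsto|x|$, and (ii) extract a bound with clean $T$-dependence (the stated $CT^2$) from the $\phi$-weighted fourfold integral. The $\phi$-integrals over $[0,T]$ are finite precisely because $H>\tfrac12$, but tracking their exact dependence on $T$ is where the estimate is most delicate, and it is also where the uniform boundedness assumptions on $Z$ and $D^H Z$ are genuinely used.
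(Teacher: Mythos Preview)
Your approach matches the paper's: split the second moment via the Wick--integral isometry into the $\phi$-norm term $\mathbb{E}\|\,|Z|\,\|_\phi^2$ plus a Malliavin-derivative remainder, bound the first by the symmetrization $|Z(s)||Z(t)|\le\tfrac12(|Z(s)|^2+|Z(t)|^2)$ together with $\int_0^T\phi(s,t)\,dt\lesssim T^{2H-1}$ to obtain $C(H,T)=HT^{2H-1}$, and control the remainder by $CT^2$ using boundedness of $D^\phi|Z|$. The only cosmetic difference is that the paper records the remainder as $\mathbb{E}\bigl(\int_0^T D_s^\phi|Z(s)|\,ds\bigr)^2$ rather than the off-diagonal double integral you write, but the $CT^2$ estimate is reached the same way.
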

\begin{proof}
According to \cite{Duncan} ( $ Theorem$  $2.1$  ),  \\
\begin{eqnarray*}
\mathbb{E}{\int_0^T (D_s^\varphi |Z(s)|)^2 ds }<\infty,
\end{eqnarray*}
and
\begin{eqnarray*}
 \mathbb{E}{(\int_0^T |Z(s)|\diamond dB^H(s))^2} = \mathbb{E}{[||Z||_\varphi^2 + \int_0^T D_s^\phi |Z(s)| ds)^2]}.
\end{eqnarray*}
Thus,
\begin{eqnarray*}
& & \mathbb{E}{(\int_0^T |Z(s)|\diamond dB^H(s))^2} = \mathbb{E}{[\int_0^T \int_0^T Z(t)Z(s)\phi(t,s) ds dt} + \mathbb{E}{(\int_0^T D_s^\phi Z(s) ds)^2}\\
& & \quad \quad \quad \quad \quad \quad \quad \quad \quad
\quad\quad \triangleq  A+B,
\end{eqnarray*}
where\\
\begin{eqnarray*}
& \quad \quad \quad \quad A = \mathbb{E}{\int_0^T \int_0^T |Z(t)||Z(s)| \phi(t,s) ds dt}, \\
& B = \mathbb{E}{(\int_0^T D_s^\phi |Z(s)| ds )^2}.
\end{eqnarray*}
We further have
\begin{eqnarray*}
A \leq  H (2H-1) \mathbb{E}{\int_0^T \int_0^T |Z(s)|^2|t-s|^{2H-2} ds dt}\\
\leq  H (2H-1) \mathbb{E}{\int_0^T |Z(s)|^2 \frac{1}{2H-1} T^{2H-1}ds}\\
\leq  H T^{2H-1} \mathbb{E}{\int_0^T |Z(s)|^2}ds \\
\leq C(H,T) \mathbb{E}{\int_0^T |Z(s)|^2 ds}.
\end{eqnarray*}
By the Cauchy-Schwarz inequality for $ B $ , we get :
\begin{eqnarray*}
B \leq  T^2 \mathbb{E}{\int_0^T |D_s^\phi |Z(s)||^2 ds} \leq C T^2.
\end{eqnarray*}
Then we can finally deduce that
\begin{eqnarray*}
\mathbb{E}[(\int_0^T |Z(s)|\diamond dB^H(s))^2] \leq  C(H,T) \mathbb{E}{\int_0^T |Z(s)|^2 ds} + C T^2.
\end{eqnarray*}
This finishes the proof of this Lemma.\\
\end{proof}

Lemma 2 can be obtained according to Definition 4 and Lemma 1.

\begin{lemma}
\label{lemma}
 Suppose that Z(s) is a stochastic process in $L[0,T]$, and $B^H(t)(H>\frac{1}{2})$
is a fractional Brownian motion. For any $0<T < \infty $, there
exists a constant $ C(H,T) $, such that the following inequality holds
\begin{equation}
\mathbb{E}[(\int_0^T |Z(s)| d^\circ B^H(s))^2] \leq 2C(H,T)\mathbb{E}[\int_0^T |Z(s)|^2 ds] + 4 C T^2,
\end{equation}
where $ C(H,T) = H T^{2H-1} $ .
\end{lemma}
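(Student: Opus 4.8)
The plan is to reduce the claim to Lemma~1 by means of the representation of the symmetric integral recorded in Remark~1. First I would apply that identity to the process $u(t)=|Z(t)|$, obtaining
\begin{equation*}
\int_0^T |Z(s)|\, d^\circ B^H(s) = \int_0^T |Z(s)| \diamond dB^H(s) + \int_0^T D_s^\phi |Z(s)|\, ds .
\end{equation*}
Before squaring I should check that $|Z|$ satisfies the hypotheses needed for this identity — namely that $|Z|\in L(0,T)$ and that $\int_0^T\int_0^T |D_s^\phi |Z(t)||\,|t-s|^{2H-2}\,ds\,dt<\infty$ — which should follow from the assumption $Z\in L[0,T]$ together with the fact that taking the modulus does not worsen the relevant integrability.

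Next I would square both sides and take expectations, splitting the right-hand side with the elementary inequality $(a+b)^2\le 2a^2+2b^2$:
\begin{equation*}
\mathbb{E}\Big[\Big(\int_0^T |Z(s)| d^\circ B^H(s)\Big)^2\Big] \le 2\,\mathbb{E}\Big[\Big(\int_0^T |Z(s)|\diamond dB^H(s)\Big)^2\Big] + 2\,\mathbb{E}\Big[\Big(\int_0^T D_s^\phi |Z(s)|\,ds\Big)^2\Big].
\end{equation*}
To the first term on the right I apply Lemma~1 directly, which bounds it by $2C(H,T)\mathbb{E}[\int_0^T|Z(s)|^2ds]+2CT^2$. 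For the second term I repeat the Cauchy--Schwarz estimate already used inside the proof of Lemma~1, giving
\begin{equation*}
\mathbb{E}\Big[\Big(\int_0^T D_s^\phi |Z(s)|\,ds\Big)^2\Big] \le T^2\,\mathbb{E}\Big[\int_0^T |D_s^\phi |Z(s)||^2\,ds\Big] \le CT^2 ,
\end{equation*}
so this term contributes at most $2CT^2$. Summing the two contributions yields exactly $2C(H,T)\mathbb{E}[\int_0^T|Z(s)|^2ds]+4CT^2$, which is the asserted inequality.

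Since the arithmetic is routine once Lemma~1 and Remark~1 are in hand, the only genuine point requiring care is the first step: verifying that the symmetric-integral decomposition of Remark~1 is legitimately applicable to $u=|Z|$, i.e. that the modulus of a process in $L[0,T]$ still meets the integrability and Malliavin-differentiability conditions under which the symmetric and Wick integrals agree. I expect this to be the main (though minor) obstacle; everything after it is a direct combination of Lemma~1 with the inequality $(a+b)^2\le 2a^2+2b^2$ and a single Cauchy--Schwarz bound on the correction term.
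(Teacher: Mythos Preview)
Your proposal is correct and follows essentially the same route as the paper: decompose the symmetric integral via Remark~1, apply $(a+b)^2\le 2a^2+2b^2$, bound the Wick term by Lemma~1, and bound the correction term by the same Cauchy--Schwarz estimate $\mathbb{E}\big[(\int_0^T D_s^\phi |Z(s)|\,ds)^2\big]\le CT^2$ used there. Your added remark about verifying that $|Z|$ meets the hypotheses of Remark~1 is a point the paper passes over silently, but otherwise the argument is identical.
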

\begin{proof}
Using Eq.(1) and the Cauchy-Schwarz
inequality, we can get
\begin{eqnarray*}
\mathbb{E}[(\int_0^T |Z(s)|d^\circ B^H(s))^2] =
\mathbb{E}[(\int_0^T |Z(s)|\diamond dB^H(s) + \int_0^T D_s^\varphi |Z(s)|ds)^2]\\
 \leq \mathbb{E}[2(\int_0^T |Z(s)|\diamond B^H(s))^2 + 2(\int_0^T D_s^\varphi |Z(s)|ds)^2]\\
 \leq 2\mathbb{E}[(\int_0^T |Z(s)|\diamond B^H(s))^2] +
2\mathbb{E}[\int_0^T D_s^\varphi |Z(s)|ds^2],\\
\end{eqnarray*}
Due to Eq.(4) and
\begin{eqnarray*}
\mathbb{E}{(\int_0^T D_s^\phi |Z(s)| ds )^2}\leq C T^2,
\end{eqnarray*}
we obtain
\begin{eqnarray*}
 \mathbb{E}[(\int_0^T |Z(s)| d^\circ B^H(s))^2]
\leq 2 C(H,T)\mathbb{E}{\int_0^T |Z(s)|^2 ds} + 2C T^2
+ 2 C T^2,
\end{eqnarray*}
namely
\begin{eqnarray*}
\mathbb{E}[(\int_0^T |Z(s)| d^\circ B^H(s))^2]\leq 2C(H,T)\mathbb{E}[\int_0^T |Z(s)|^2 ds] + 4C T^2.
\end{eqnarray*}
the proof is completed.
\end{proof}

\subsection{Stochastic differential equations driven by fractional Brownian motion }

In this section, we concern the symmetric integral of stochastic differential equations with respect to fBm. Solutions of the stochastic differential equation driven by fractional Browinan motion have been studied intensively by using the pathwise approach \cite{Lyons,Nualart2}.\\
Consider the equation on $\mathbb{R}^d$\\
\begin{equation}
X(t)=X(0)+\int_0^t b(s,X(s))ds + \int_0^t \sigma(s,X(s))d^\circ B^{H} (s),
\end{equation}
where $X(0)$ is a given $d$-dimensional random variable, $b(s,X(s)):[0,T] \times \mathbb{R}^d \longrightarrow \mathbb{R}^d$ is a measurable vector function, $\sigma(s,X(s))is a d\times m $ matrix with each element $\sigma_{j,i}(s,X(s)): [0,T] \times \mathbb{R}^d \longrightarrow \mathbb{R}^d $ a measurable vector function, and the processes $B^{H}(t)$, represents $d$-dimensional fractional Brownian motions with Hurst parameter $H$ defined in a complete probability space $(\Omega,\mathcal {F},\mathbb{P})$. Denote by $ \sigma = (\sigma_{j,i})_{i,j=1}^{d,m}$ the matrix of "diffusion" and $ b= (b_i)_{i=1}^d $ the "drift" vector, $ |\sigma| :=(\sum_{i,j} |\sigma_{j,i}|^2)^\frac{1}{2} $ , $ |b|:= (\sum_i (b_i)^2)^\frac{1}{2}$ .\\

Let us consider the following assumptions on the coefficients :\\

$ \sigma (t,x)$ is differentiable in $ x$ , and satisfies :
there exists $ M>0 $ , $ 0 < \gamma $ , $ k \leq 1 $ , and for any $ N > 0 $,  $ M_N > 0 $,
\\
{\bf (i)} $ \sigma $ is Lipschitz continous in $ x $ ,$ \forall$ $ x $, $ y   \in \mathbb{R}^d $ , $ t \in [0,T] $ :
\begin{eqnarray*}
    |\sigma(t,x)-\sigma(t,y)| \leq  M |x-y|,
\end{eqnarray*}
{\bf (ii)} $ x $ -derivative of $ \sigma $ is local H\"{o}lder continous in $ x $ , $ \forall $ $ |x|,|y| \leq  N , t \in [0,T]$
\begin{eqnarray*}
    |\sigma_{x}(t,x)-\sigma_{x}(t,y)| \leq  M_N |x-y|^k,
\end{eqnarray*}
{\bf (iii)}  $ \sigma $ is H\"{o}lder continous in times, for all $ x \in \mathbb{R}^d , t, s \in [0,T]$ :
\begin{eqnarray*}
    |\sigma(t,x)-\sigma(s,x)| + |\sigma_{x_i}(t,x)-\sigma_{x_i}(s,x)| \leq  M|t-s|^\gamma.
\end{eqnarray*}
for each $ i=0,\ldots , d$.\\

The function $ b=b(t,x)$ satisfies the following conditions:\\
{\bf (iv)} for all $ N \geq 0 $ , there exist $ L_N > 0 $ , for all $ |x|,|y| \leq N , t \in [0,T] $ , such that
\begin{eqnarray*}
   |b(t,x)-b(t,y)| \leq L_N|x-y|,
\end{eqnarray*}
{\bf (v)} there exists the function $ b_0 \in L_p(0,T;\mathbb{R}^d) $ $(p\geq 2)$ , and $ L > 0 $ , for any $ (t,x) \in [0,T] \times \mathbb{R} $ such that
\begin{eqnarray*}
   |b(t,x)| \leq L|x| + b_0(t).
\end{eqnarray*}\\
On the basis of $ Theorem $  $ 3.1.4 $  and $ Remark $ $3.1.5$ in \cite{Mishura}, there exists the unique solution $(X_t,t\in [0,T])$ of the Eq.(6).

\subsection{\bf An averaging principle}
Now we discuss a standard stochastic differential equation using an averaging principle in $ \mathbb{R}^d $.\\
\mbox{}\quad The standard stochastic differential equation is defined as:\\
\begin{equation}
\begin{aligned}
    X_\epsilon(t) = X(0) + \epsilon^{2H} \int_0^t b(s, X_\epsilon(s)) ds
    + \epsilon^{H}  \int_0^t \sigma(s, X_\epsilon(s))d^\circ B^H(s).
\end{aligned}
\end{equation}
where $ X(0) = X_0 $ is a given $d$-dimensional random varibale as the initial condition, $t\in [0, T]$ and the coefficients have the
same conditions as in Eq.$(6)$, and $ \epsilon \in
(0,\epsilon_0]$ is a positive small parameter with $ \epsilon_0 $ a
fixed number.\\
\mbox{}\quad Assume that $(i)-(v)$ (the Lipschitz and growth conditions) are satisfied, besides the mappings $ \bar{b}: \mathbb{R}^d \rightarrow
\mathbb{R}^d $ , $ \bar{\sigma}: \mathbb{R}^d \rightarrow \mathbb{R}^d $ ,
are measurable. And presume they meet the following additional inequalities :
\\
{\bf (C1)}
\begin{eqnarray*}
   \frac{1}{T_1} \int_0^{T_1} | b(s,y)- \bar {b}(y)| ds
   \leq \varphi_1(T_1)(1+ |y|),
\end{eqnarray*}\\
{\bf (C2)}
\begin{eqnarray*}
   \frac{1}{T_1} \int_0^{T_1} |\sigma(s,y)- \bar{\sigma}(y)|^2
   ds \leq \varphi_2(T_1)(1+ | y|^2).
\end{eqnarray*}
where $ T_1 \in [0,T],  \varphi_i(T_1)$ are positive bounded functions
with $ \lim_{T_1 \rightarrow \infty} \varphi_i(T_1)$ \\$= 0$, $i = 1,2$.

Then, we can obtain the SDEs with the averaging principle :
\begin{equation}
    Z_\epsilon(t) = X(0) + \epsilon^{2H} \int_0^t \bar{b}(Z_\epsilon(s)) ds + \epsilon^{H} \int_0^t \bar{\sigma}
    (Z_\epsilon(s))
    d^\circ B^H(s).
\end{equation}
This SDE is called the averaged SDE of the original standard SDE (7). Under the similar conditions such as
$ X(t)$ in Eq.(6), this equation will have a unique solution
$ Z_\epsilon(t) $.\\
\mbox{}\quad Now We claim the following main theorems to show relationship between solution
processes $ X_\epsilon(t) $ and $ Z_\epsilon(t) $.
\medskip
It shows that the solution of averaged Eq.(8) converges to that
of the original Eq.(7) in the sense of mean square and probability respectively. \\

\begin{theorem}
Suppose that the original SDEs (7) and the averaged SDEs (8) both satisfy the assumptions
  (i)-(v)and (C1)-(C2). For a given arbitrarily small
number $ \delta_1 > 0 $ , there exist $ L > 0 $ , $
\epsilon_1 \in (0,\epsilon_0] $ and  $ \beta \in (0, 1) $,
such that for any $ \epsilon \in (0,\epsilon_1] $ ,
\begin{equation*}
     \mathbb{E}(|X_\epsilon(t)- Z_\epsilon(t)|^2)\leq \delta_1.
\end{equation*}
\end{theorem}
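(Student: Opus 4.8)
The plan is to estimate the second moment of the difference $Y_\epsilon(t) := X_\epsilon(t) - Z_\epsilon(t)$ and to close the estimate by a Gronwall argument, exploiting the small prefactors $\epsilon^{2H}$ and $\epsilon^{H}$ to push the final bound below $\delta_1$. Subtracting (8) from (7) gives
\[
Y_\epsilon(t) = \epsilon^{2H}\int_0^t [b(s,X_\epsilon(s)) - \bar b(Z_\epsilon(s))]\,ds + \epsilon^{H}\int_0^t [\sigma(s,X_\epsilon(s)) - \bar\sigma(Z_\epsilon(s))]\,d^\circ B^H(s).
\]
Using $|a+b|^2 \le 2|a|^2 + 2|b|^2$, I would split $\mathbb{E}|Y_\epsilon(t)|^2$ into a drift contribution and a diffusion contribution. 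In each integrand I then add and subtract the averaged coefficient evaluated along $X_\epsilon$, writing $b(s,X_\epsilon) - \bar b(Z_\epsilon) = [b(s,X_\epsilon) - \bar b(X_\epsilon)] + [\bar b(X_\epsilon) - \bar b(Z_\epsilon)]$ and similarly for $\sigma$. A second application of the elementary inequality then produces four groups: two \emph{Lipschitz} groups comparing $X_\epsilon$ and $Z_\epsilon$ through the averaged coefficients, and two \emph{averaging-error} groups measuring how far the genuine coefficients deviate from their averages along the single trajectory $X_\epsilon$.

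For the Lipschitz groups I would bound the drift integral by the Cauchy--Schwarz inequality in time and the stochastic integral by Lemma 2, and then apply the Lipschitz conditions (i) and (iv) to $\bar\sigma$ and $\bar b$. This yields terms of the form $C\,\epsilon^{2H}\int_0^t \mathbb{E}|Y_\epsilon(s)|^2\,ds$ (the drift group carrying the even smaller factor $\epsilon^{4H}$), together with an additive remainder proportional to $\epsilon^{2H}T^2$ coming from the second term in Lemma 2. For the averaging-error groups I would first establish, from the linear growth condition (v), Lemma 2 and Gronwall's inequality, the a priori bound $\sup_{t\in[0,T]}\mathbb{E}|X_\epsilon(t)|^2 \le C_T < \infty$; with this in hand, conditions (C1) and (C2) control $\int_0^t \mathbb{E}|b(s,X_\epsilon)-\bar b(X_\epsilon)|^2\,ds$ and its $\sigma$-analogue by quantities proportional to $\varphi_1(t)$ and $\varphi_2(t)$ times $1+C_T$, each again multiplied by the appropriate power of $\epsilon$.

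Collecting all contributions puts the estimate in the Gronwall form $\mathbb{E}|Y_\epsilon(t)|^2 \le G(\epsilon,T) + K(\epsilon)\int_0^t \mathbb{E}|Y_\epsilon(s)|^2\,ds$, where both $G(\epsilon,T)$ and $K(\epsilon)$ carry positive powers of $\epsilon$. Gronwall's lemma gives $\mathbb{E}|Y_\epsilon(t)|^2 \le G(\epsilon,T)\,e^{K(\epsilon)T}$ uniformly on $[0,T]$; since this is bounded by $L\,\epsilon^{\beta}$ for suitable constants $L>0$ and $\beta\in(0,1)$, choosing $\epsilon_1$ small enough forces the right-hand side below $\delta_1$, which is the claim.

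The main obstacle, and the step demanding the most care, is the treatment of the averaging-error groups: conditions (C1)--(C2) are stated for a \emph{fixed} point $y$, whereas they must be applied along the random, time-dependent trajectory $X_\epsilon(s)$. Making this rigorous requires the uniform moment bound on $X_\epsilon$ together with a careful interchange of expectation and time integration. A secondary point to watch is that the non-vanishing remainder proportional to $\epsilon^{2H}T^2$ generated by Lemma 2 does not disappear on its own; it is precisely the $\epsilon$-scaling of the coefficients that absorbs it and guarantees the overall decay in $\epsilon$.
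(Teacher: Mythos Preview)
Your argument is essentially the paper's: split the difference into drift and diffusion pieces, decompose each into a Lipschitz group and an averaging-error group, control the stochastic integrals via Lemma~2, and close with Gronwall. The one structural difference is the choice of intermediate term. You insert $\bar b(X_\epsilon)$ and $\bar\sigma(X_\epsilon)$, so the Lipschitz step uses the \emph{averaged} coefficients and the averaging error is measured along $X_\epsilon$; the paper instead inserts $b(s,Z_\epsilon)$ and $\sigma(s,Z_\epsilon)$, so the Lipschitz step uses the \emph{original} coefficients and the averaging error is measured along $Z_\epsilon$. Both decompositions are legitimate here because the hypotheses (i)--(v) are imposed on both equations, and the resulting Gronwall inequalities have the same form. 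Your observation that applying (C1)--(C2) along a random, time-varying trajectory is the delicate step applies equally to the paper's version; the paper handles it exactly as you propose, via an a~priori moment bound (on $Z_\epsilon$ rather than $X_\epsilon$).

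One point worth adjusting is your reading of the constants $L$ and $\beta$. In the paper they are not the constant and exponent of a final bound of the form $L\epsilon^{\beta}$; rather, they parametrize the time window $(0,L\epsilon^{-2H\beta}]\subseteq[0,T]$ on which the Gronwall output is shown to satisfy $\mathbb{E}|X_\epsilon(t)-Z_\epsilon(t)|^{2}\le K_3\,\epsilon^{1-2H\beta}$, and it is this last quantity that is pushed below $\delta_1$ by choosing $\epsilon_1$ small. Your final paragraph, which fixes $T$ and lets $\epsilon\to 0$, reaches the same conclusion but assigns $L$ and $\beta$ a different role than the statement intends.
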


\begin{remark}
{\bf (i)} This conclusion shows that the solution of
averaged SDEs converges to that of initial SDEs in a certain sense. That is {\bf\textit{Theorem 1}} means the convergence of these two
solutions in the sense of mean square.

 { \bf (ii)} If only partial conditions hold,   {\bf\textit{Theorem 1}} may still hold.
 In this situation we may speak of partial averaging.

\end{remark}
\begin{proof}

According to the above analysis, we start with

\begin{eqnarray*}
 X_\epsilon(t)- Z_\epsilon(t) = \epsilon^{2H} \int_0^t
[b(s,X_\epsilon(s)) -\bar{b}(Z_\epsilon(s))] ds  + \\
\quad \quad\quad \epsilon^{H} \int_0^t [\sigma
    (s,X_\epsilon(s)) - \bar{\sigma}
    (Z_\epsilon(s))]
    d^\circ B^H(s),
\end{eqnarray*}
and employ the following inequality for $ n \in \mathbb{N} $, and $ x_1 , x_2 , \ldots , x_n \in \mathbb{R} $ :
\begin{eqnarray}
   |x_1 + x_2 + \ldots + x_m|^2 \leq m (|x_1|^2 + |x_2|^2 + \ldots + |x_m|^2),
\end{eqnarray}

we arrive at
\begin{eqnarray*}
& & |X_\epsilon(t)- Z_\epsilon(t)|^2 \leq 2
 \epsilon^{4H} | \int_0^t
[b(s,X_\epsilon(s)) - \bar{b}(Z_\epsilon(s))] ds|^2 + \\
& &　\quad\quad\quad \quad\quad \quad\quad\quad 2 \epsilon^{2H} |\int_0^t [\sigma
    (s,X_\epsilon(s))-\bar{\sigma}
    (Z_\epsilon(s))]
    d^\circ B^H(s)|^2\\
& &　\quad\quad \quad\quad\quad\quad\quad  = I_1^2 + I_2^2.
\end{eqnarray*}
where  $ [0,t]\in [0,u]\in [0,T]$,$ I_i , i=1 , 2 $ denote the above terms
respectively. Now we present some estimates for
$ I_i , i=1 , 2 $.

Firstly, we apply the inequality (9) to get
\begin{eqnarray*}
& & I_1^2 = 2\epsilon^{4H}|\int_0^t
[b(s,X_\epsilon(s)) - \bar{b}(Z_\epsilon(s))] ds|^2  \\
& & \quad  \leq 2\epsilon^{4H} |\int_0^t
[b(s,X_\epsilon(s))-b(s,Z_\epsilon(s))+ b(s,Z_\epsilon(s))
-\bar{b}(Z_\epsilon(s))] ds|^2 \\
& & \quad  \leq 4\epsilon^{4H} | \int_0^t
[b(s,X_\epsilon(s))-b(s,Z_\epsilon(s))] ds |^2 +\\
& &  \quad\quad 4\epsilon^{4H} |\int_0^t
[b(s,Z_\epsilon(s)) -\bar{b}(Z_\epsilon(s))] ds|^2\\
& & \quad  \leq I_{11}^2 + I_{12}^2,
\end{eqnarray*}
 where
\begin{eqnarray*}
 & & I_{11}^2 = 4\epsilon^{4H} | \int_0^t
[b(s,X_\epsilon(s))- b(s,Z_\epsilon(s))]ds|^2,  \\
& & I_{12}^2 = 4\epsilon^{4H}  |\int_0^t
[b(s,Z_\epsilon(s)) - \bar{b}(Z_\epsilon(s))]ds|^2.
\end{eqnarray*}

By the Cauchy-Schwarz inequality for $ I_{11}^2 $ , we obtain :
\begin{eqnarray*}
 |I_{11}|^2 \leq 4\epsilon^{4H}  t |\int_0^t
[b(s,X_\epsilon(s))- b(s,Z_\epsilon(s))]^2 ds|,
\end{eqnarray*}

Because of condition (ii) and taking expectation, we can get
\begin{eqnarray*}
 & &  \mathbb{E}|I_{11}|^2 \leq 4\epsilon^{4H} \mathbb{E} ( t \int_0^t
|b(s,X_\epsilon(s))- b(s,Z_\epsilon(s))|^2ds )\\
& &\quad \quad \quad  \leq 4\epsilon^{4H} u L_N^2 \mathbb{E}
 \int_0^t |X_\epsilon(s)-Z_\epsilon(s)|^2 ds)\\
& &  \quad \quad \quad  \leq 4\epsilon^{4H} u L_N^2 \mathbb{E} (\int_0^u |X_\epsilon(s)-Z_\epsilon(s)|^2 ds)\\
& &  \quad \quad \quad  \leq 4\epsilon^{4H} u K_{11} \int_0^u
\mathbb{E} (
|X_\epsilon(s_1)-Z_\epsilon(s_1)|^2 ) du.
\end{eqnarray*}
where $ K_{11} $ is a constant.

Then about $ I_{12}^2 $ , we use condition (C1), $ \varphi_1(t)$ is positive bounded function and take expectation to
yield :
\begin{eqnarray*}
 & & \quad\mathbb{E} |I_{12}|^2 \leq  4\epsilon^{4H} \mathbb{E}( t^2
[\frac{1}{t}\int_0^t
|b(s,Z_\epsilon(s)) -\bar{b}(Z_\epsilon(s))|ds]^2)\\
 \\& & \quad \quad \quad \quad \leq 8\epsilon^{4H} u^2 \mathbb{E} \{ \varphi_1(t)^2 (1+
|Z_\epsilon(s)|^2))\}\\
& & \quad\quad\quad \quad \leq 8\epsilon^{4H} u^2 K_{12} (1+
\mathbb{E}(\sup_{0 \leq t \leq u}(|Z_\epsilon(t)|^2))\\
& &  \quad\quad\quad\quad \leq 8\epsilon^{4H} u^2 K_{12},
\end{eqnarray*}
where $ K_{12} $ denotes a constant which may differ in the above
inequality. For each $ t \geq 0 $ , we get
\begin{eqnarray*}
 \mathbb{E}|I_{1}|^2 \leq 4\epsilon^{4H} u K_{11} \int_0^u
\mathbb{E} (|X_\epsilon(s_1)-Z_\epsilon(s_1)|^2 ) du+ 8\epsilon^{4H} u^2 K_{12}.      \quad\quad        (Z_1)
\end{eqnarray*}
Now take expectation on $ I_2^2 $ to obtain
\begin{eqnarray*}
 & & \mathbb{E} |I_{2}|^2 = 2 \epsilon^{2H}  \mathbb{E} |\int_0^t [\sigma
    (s,X_\epsilon(s))-\bar{\sigma}
    (Z_\epsilon(s))]
    d^\circ B^H(s))^2|\\
 & &\quad \quad \quad  \leq 4 \epsilon^{2H} \mathbb{E} (|\int_0^t [\sigma
    (s,X_\epsilon(s))-\sigma
    (s,Z_\epsilon(s))]d^\circ B^H(s)|^2)+\\
 & & \quad \quad \quad  \quad 4 \epsilon^{2H} \mathbb{E} (|\int_0^t [\sigma
    (s,Z_\epsilon(s))-\bar{\sigma}
    (Z_\epsilon(s))]d^\circ B^H(s)|^2) \\
& & \quad\quad\quad = I_{21}^2+I_{22}^2.
\end{eqnarray*}
where
\begin{eqnarray*}
& & I_{21}^2= 4 \epsilon^{2H}
\mathbb{E} ( |\int_0^t [\sigma
    (s,X_\epsilon(s))-\sigma
    (s,Z_\epsilon(s))]d^\circ B^H(s)|^2), \\
& & I_{22}^2=4 \epsilon^{2H} \mathbb{E} (|\int_0^t [\sigma
    (s,Z_\epsilon(s))-\bar{\sigma}
    (Z_\epsilon(s))]d^\circ B^H(s)|^2).
\end{eqnarray*}

  By the Lemma 2, conditions (i) and (C2), it is easy to get
\begin{eqnarray*}
 & & \quad I_{21}^2 \leq 4 \epsilon^{2H} \mathbb{E}(|\int_0^t
 M |X_\epsilon(s)-Z_\epsilon(s)|d^\circ B^H(s)|^2)\\
 & & \quad\quad\quad \leq 4 \epsilon^{2H} M^2 \mathbb{E}
 |\int_0^t |X_\epsilon(s)-Z_\epsilon(s)|d^\circ B^H(s)|^2\\
 & & \quad\quad\quad \leq 4 \epsilon^{2H} M^2 (2H t^{2H-1} \mathbb{E}
 [\int_0^t|X_\epsilon(s)-Z_\epsilon(s)|^2ds]+4C t^2)\\
 & & \quad\quad\quad \leq 8 \epsilon^{2H} H u^{2H-1} K_{211}\int_0^u \mathbb{E}(|X_\epsilon(s_1)-Z_\epsilon(s_1)|^2)du + 16\epsilon^{2H} u^2 K_{212}.
\end{eqnarray*}

 Due to the conditions (C2) , we obtain
\begin{eqnarray*}
 & & \quad I_{22}^2\leq 4 \epsilon^{2H} \mathbb{E}(|\int_0^u
 [\sigma(s,Z_\epsilon(s))-\bar{\sigma}(Z_\epsilon(s))]d^\circ B^H(s)|^2)\\
 & & \quad\quad\quad\leq 4\epsilon^{2H} \{2H u^{2H-1} \mathbb{E}
 [\int_0^t|\sigma(s,Z_\epsilon(s)-\bar{\sigma}(Z_\epsilon(s))|^2ds]+ 4 C u^2\}\\
 & & \quad\quad\quad\leq 8\epsilon^{2H} H u^{2H-1} \mathbb{E}[\int_0^t |\sigma(s,Z_\epsilon(s)-\bar{\sigma}(Z_\epsilon(s))|^2ds]+
 16\epsilon^{2H}Cu^2\\
 & & \quad\quad\quad\leq 8\epsilon^{2H} H u^{2H} K_{221}\{ [1+\mathbb{E}(\sup_{0 \leq t \leq u}
 |Z_\epsilon (t)|^2)]\}+16 \epsilon^{2H} C u^2 \\
 & & \quad\quad\quad\leq 8\epsilon^{2H} u^{2H} H K_{221}+16 \epsilon^{2H} u^2 K_{222}.
\end{eqnarray*}
where the last inequality is obtained by the same arguments of
   $ \mathbb{E}|I_1|^2 $, and $ K_{2i}, i=1,2 $ denote positive constants that may differ in different cases.
   Then
\begin{eqnarray*}
 & & \mathbb{E} |I_{2}|^2  \leq 4 \epsilon^{2H} H u^{2H-1} K_{211}\int_0^u \mathbb{E}(|X_\epsilon(s_1)-Z_\epsilon(s_1)|^2) du + \\
& & \quad\quad\quad\quad 16 \epsilon^{2H} u^2 K_{212}+8 \epsilon^{2H} u^{2H} H K_{221}+16 \epsilon^{2H} u^2 K_{222}.\quad\quad\quad\quad\quad\quad (Z_2)
\end{eqnarray*}

Therefore from above discussions $(Z_1)$ and $(Z_2)$, we can get
\begin{eqnarray*}
& &  \mathbb{E} ( |X_\epsilon(t)-Z_\epsilon(t)|^2) \\
& & \leq  4\epsilon^{4H} u K_{11} \int_0^u \mathbb{E} (
|X_\epsilon(s_1)-Z_\epsilon(s_1)|^2 ) du+ 8\epsilon^{4H} u^2 K_{12}+\\
 & &
\quad 8 \epsilon^{2H} H u^{2H-1} K_{211}\int_0^u \mathbb{E}(|X_\epsilon(s_1)-Z_\epsilon(s_1)|^2) du+ 16 \epsilon^{2H} u^2 K_{212} +\\
 & &
\quad 8 \epsilon^{2H} u^{2H} H K_{221} + 16 \epsilon^{2H} u^2 K_{222}\\
& & \leq (4 \epsilon^{4H} u K_{11}+8\epsilon^{2H} H u^{2H-1}K_{211})\int_0^u
 \mathbb{E} ( |X_\epsilon(s_1)-Z_\epsilon(s_1)|^2) du + \\
 & & \quad 8 \epsilon^{4H} u^2 K_{12} + 16\epsilon^{2H} u^2 K_{212}+ 8\epsilon^{2H} u^{2H} H K_{221}+16 \epsilon^{2H} u^2 K_{222}.
\end{eqnarray*}

Now by the Gronwall-Bellman inequality, we obtain
\begin{eqnarray*}
& & \mathbb{E} (
|X_\epsilon(t)-Z_\epsilon(t)|^2)\\
& & \leq 8 \epsilon^{2H} u( \epsilon^2 u K_{12}+ 2u K_{212}+ u^{2H-1} H  K_{221}+2u K_{222} )\\
& & \quad  \exp {4 \epsilon^{2H} (\epsilon^2 u K_{11}+2H u^{2H-1} K_{211}}).
\end{eqnarray*}

Select $ \beta \in (0, 1), L > 0 $ , such that for all $ t \in
(0,L\epsilon^{-2H \beta}]\subseteq [0,T] $ , we have
\begin{eqnarray*}
 \mathbb{E}( |X_\epsilon(t)-Z_\epsilon(t)|^2)  \leq K_3
\epsilon^{1-2H\beta}.
 \end{eqnarray*}
where
\begin{eqnarray*}
 & & K_3 = 8 L \epsilon^{1-2H\beta}(L\epsilon^{3-2H\beta-2H} K_{12}
  + 2L \epsilon^{1-2H\beta-2H}K_{212}+ \\
 & & \quad\quad\quad L^{2H-1}\epsilon^{2H\beta - 4 H^2\beta-4H^2+4H-1}H K_{221}+2L \epsilon^{1-2H \beta-2H}K_{222} )\\
 & & \quad\quad\quad \exp{4 \epsilon^{2H}(L\epsilon^{3-2 H\beta-2H} K_{11} + L^{2H-1} H \epsilon^{-4H^2\beta-2H^2+2H\beta +4H-1}K_{211})}.
\end{eqnarray*}
 it is a constant.

 Consequently, given any number $ \delta_1 > 0 $ , we can select
$ \epsilon_1 \in (0,\epsilon_0] $ , such that for every $ \epsilon \in
(0,\epsilon_1] $ , and for each $ t \in (0,L\epsilon^{-2H\beta}] $
\begin{eqnarray*}
 \mathbb{E}( |X_\epsilon(t)-Z_\epsilon(t)|^2)  \leq \delta_1.
\end{eqnarray*}
This is all of the proof.
\end{proof}

\medskip

We also have the following result on uniform convergence in probability.

\begin{theorem}
 Suppose that all assumptions (i)--(ii) and (C1)--(C2) are satisfied. Then for any number $ \delta_2 > 0 $ , we have
\begin{equation*}
   \lim_{\epsilon \rightarrow 0} \mathbb{P} ( |X_\epsilon(t)-Z_\epsilon(t)|>\delta_2) = 0,
\end{equation*}
where $ L $ and $ \beta $ are the same to Theorem 1.
\end{theorem}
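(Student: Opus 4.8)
The plan is to obtain convergence in probability as an immediate consequence of the mean-square estimate already established in Theorem 1, via Chebyshev's (Markov's) inequality. The key point is that Theorem 1 does not merely assert $\mathbb{E}(|X_\epsilon(t)-Z_\epsilon(t)|^2)\le\delta_1$ for a fixed $\delta_1$; its proof in fact yields the sharper quantitative rate
\begin{equation*}
\mathbb{E}(|X_\epsilon(t)-Z_\epsilon(t)|^2)\le K_3\,\epsilon^{1-2H\beta},
\end{equation*}
valid for every $t\in(0,L\epsilon^{-2H\beta}]$, where $K_3$ is a constant depending on $L,\beta,H$ and the structural constants $K_{11},K_{12},K_{211},K_{212},K_{221},K_{222}$. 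I would take this rate, rather than the qualitative conclusion, as the starting point.

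First I would fix an arbitrary $\delta_2>0$ and apply Chebyshev's inequality, which for a square-integrable random variable gives
\begin{equation*}
\mathbb{P}(|X_\epsilon(t)-Z_\epsilon(t)|>\delta_2)\le\frac{1}{\delta_2^2}\,\mathbb{E}(|X_\epsilon(t)-Z_\epsilon(t)|^2).
\end{equation*}
Next I would substitute the rate from Theorem 1 into the right-hand side, obtaining
\begin{equation*}
\mathbb{P}(|X_\epsilon(t)-Z_\epsilon(t)|>\delta_2)\le\frac{K_3}{\delta_2^2}\,\epsilon^{1-2H\beta}
\end{equation*}
uniformly for $t\in(0,L\epsilon^{-2H\beta}]$. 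Finally I would let $\epsilon\to0$: since $\delta_2$ is fixed and $K_3$ stays bounded (being the same constant as in Theorem 1), the conclusion follows as soon as the power of $\epsilon$ drives the estimate to zero.

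The one point that genuinely needs care — and the only real obstacle — is the sign of the exponent $1-2H\beta$. For the bound $\epsilon^{1-2H\beta}$ to vanish as $\epsilon\to0$ we require $1-2H\beta>0$, i.e. $\beta<\frac{1}{2H}$. Because $H\in(\frac12,1)$ we have $\frac{1}{2H}\in(\frac12,1)$, so this is a genuine restriction on $\beta\in(0,1)$ rather than an automatic consequence; it is satisfied by the same admissible choice of $\beta$ that makes the time scale $L\epsilon^{-2H\beta}$ meaningful in Theorem 1. Once $\beta$ is fixed in $(0,\frac{1}{2H})$, one has $\epsilon^{1-2H\beta}\to0$, hence $\mathbb{P}(|X_\epsilon(t)-Z_\epsilon(t)|>\delta_2)\to0$, which is precisely the claimed convergence in probability on the stated time scale. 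I would emphasize that this passage from $L^2$ convergence to convergence in probability is the standard one, so the substance of the result rests entirely on the estimate of Theorem 1; the present theorem is essentially its probabilistic corollary.
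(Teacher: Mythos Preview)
Your proposal is correct and follows exactly the same route as the paper: apply the Chebyshev--Markov inequality and insert the quantitative bound $\mathbb{E}(|X_\epsilon(t)-Z_\epsilon(t)|^2)\le K_3\,\epsilon^{1-2H\beta}$ from the proof of Theorem~1, then let $\epsilon\to0$. Your additional remark about needing $\beta<\tfrac{1}{2H}$ for the exponent $1-2H\beta$ to be positive is a valid observation that the paper leaves implicit.
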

\begin{proof}

On the basis of \textit{Theorem 1} and the Chebyshev-Markov inequality, for any
given number $ \delta_2 > 0 $ , one can find
\begin{equation*}
\mathbb{P} (|X_\epsilon(t)-Z_\epsilon(t)| > \delta_2)
\end{equation*}
\begin{equation*}
\quad\quad\quad\quad\quad\quad \leq \frac{1}{\delta_2^2}\mathbb{E}( |X_\epsilon(t)-Z_\epsilon(t)|^2)\leq
\frac{K_3}{\delta_2^2} \epsilon^{1-2H\beta}.
\end{equation*}
Let $ \epsilon \rightarrow 0 $ and the required result follows.
\end{proof}

\begin{remark}  {\bf\textit{Theorem 2}} means the convergence in
probability between the original solution  $ X_\epsilon(t) $ and the averaged solution $ Z_\epsilon(t) $ .
\end{remark}

Then, we also can study the forward integral and backward integral of stochastic differential equations driven by fBm, and the definition of the forward integral and backward integral are the same to section $2$ :
\begin{equation}
dX(t) = b(t,X(t))dt + \sigma(t,X(t))d^- B^H(t),
\end{equation}
\begin{equation}
dX(t) = b(t,X(t))dt + \sigma(t,X(t))d^+ B^H(t).
\end{equation}

On the basis of the Eq.(7) and Eq.(8), we can get the standard stochastic differential equation and the averaged SDEs :
\begin{equation*}
    X_\epsilon(t) = X(0) + \epsilon^{2H} \int_0^t b(s, X_\epsilon(s)) ds
    + \epsilon^{H}  \int_0^t \sigma(s, X_\epsilon(s))d^- B^H(s), \quad\quad\quad (12a)
\end{equation*}
\begin{equation*}
    Z_\epsilon(t) = X(0) + \epsilon^{2H} \int_0^t \bar{b}(Z_\epsilon(s)) ds + \epsilon^{H} \int_0^t \bar{\sigma}
    (Z_\epsilon(s))
    d^- B^H(s),  \quad\quad\quad\quad (12b)
\end{equation*}
\begin{equation*}
    X_\epsilon(t) = X(0) + \epsilon^{2H} \int_0^t b(s, X_\epsilon(s)) ds
    + \epsilon^{H}  \int_0^t \sigma(s, X_\epsilon(s))d^+ B^H(s), \quad\quad\quad (13a)
\end{equation*}
\begin{equation*}
    Z_\epsilon(t) = X(0) + \epsilon^{2H} \int_0^t \bar{b}(Z_\epsilon(s)) ds + \epsilon^{H} \int_0^t \bar{\sigma}
    (Z_\epsilon(s))
    d^+ B^H(s).  \quad\quad\quad\quad (13b)
\end{equation*}
where $ X(0) = X_0 $ is the initial condition, and the coefficients satisfy the $(i)-(v)$ conditions.\\
\begin{theorem}
Assume the original SDEs $(12,14)$ and the averaged SDEs $(13,15)$ both satisfy the
  (i)-(v) and (C1)-(C2). For a given arbitrarily small
number $ \delta_3 > 0 $ , there exists $ Q > 0 $ , $
\epsilon_2 \in (0,\epsilon_0] $ and  $ \beta \in (0, 1) $ ,
such that for any $ \epsilon \in (0,\epsilon_2] $ ,
\begin{equation*}
     \mathbb{E}(|X_\epsilon(t)- Z_\epsilon(t)|^2)\leq \delta_3,
\end{equation*}
And then for any number $ \delta_4>0 $, we can get
\begin{equation*}
   \lim_{\epsilon \rightarrow 0} \mathbb{P} ( |X_\epsilon(t)-Z_\epsilon(t)|>\delta_4) = 0.
\end{equation*}
\end{theorem}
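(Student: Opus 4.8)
\medskip

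The plan is to reduce Theorem~3 to the already-proved Theorems~1 and 2 by exploiting the coincidence of the three pathwise integrals. By Remark~1 and Remark~3, under the assumptions (i)--(v) the forward and backward integrals admit exactly the same decomposition as the symmetric integral, so that
\begin{equation*}
\int_0^T u(t)\,d^{-} B^H(t) = \int_0^T u(t)\,d^{+} B^H(t) = \int_0^T u(t)\,d^{\circ}B^H(t),
\end{equation*}
each side being equal to $\int_0^T u(t)\diamond dB^H(t) + \int_0^T D_s^\varphi u(s)\,ds$. Consequently the $L^2$ estimate of Lemma~2 holds verbatim for $d^{-} B^H$ and $d^{+} B^H$, with the identical constant $C(H,T)=HT^{2H-1}$. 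First I would record this as the key structural fact: the diffusion terms in the forward/backward standard and averaged SDEs involve the same integrand differences $\sigma(s,X_\epsilon(s))-\sigma(s,Z_\epsilon(s))$ and $\sigma(s,Z_\epsilon(s))-\bar{\sigma}(Z_\epsilon(s))$ as in (7)--(8), and since their forward (resp.\ backward) integral equals the symmetric integral, every estimate in the proof of Theorem~1 that invoked Lemma~2 transfers without change.

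Next I would repeat the splitting used in Theorem~1 verbatim. Writing $X_\epsilon(t)-Z_\epsilon(t)$ as the sum of a drift integral and a forward (resp.\ backward) stochastic integral, I would apply the elementary inequality (9) to separate the Lipschitz-in-$x$ contribution (controlled by (i) and (iv)) from the averaging contribution (controlled by (C1)--(C2)), take expectations, and bound the diffusion part using the forward/backward analog of Lemma~2 established above. This produces the same inequality as in Theorem~1,
\begin{eqnarray*}
\mathbb{E}(|X_\epsilon(t)-Z_\epsilon(t)|^2) \leq (4\epsilon^{4H} u K_{11} + 8\epsilon^{2H} H u^{2H-1} K_{211})\int_0^u \mathbb{E}(|X_\epsilon(s_1)-Z_\epsilon(s_1)|^2)\,du + R(\epsilon,u),
\end{eqnarray*}
where $R(\epsilon,u)$ collects the remaining terms with the same $\epsilon$-dependence. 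Applying the Gronwall--Bellman inequality and then restricting to $t\in(0,Q\epsilon^{-2H\beta}]$ yields a bound of the form $K\epsilon^{1-2H\beta}$, which can be made smaller than $\delta_3$ by choosing $\epsilon_2$ sufficiently small. The convergence-in-probability statement then follows exactly as in Theorem~2 from the Chebyshev--Markov inequality, giving $\mathbb{P}(|X_\epsilon(t)-Z_\epsilon(t)|>\delta_4)\leq \delta_3/\delta_4^2 \to 0$.

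The only genuinely nontrivial step is to justify that the relevant integrands---$|X_\epsilon(s)-Z_\epsilon(s)|$ in the estimate for the Lipschitz part, and $|\sigma(s,Z_\epsilon(s))-\bar{\sigma}(Z_\epsilon(s))|$ in the averaging part---satisfy the hypotheses of Remarks~1 and 3, i.e.\ they lie in $L(0,T)$ and obey the finite Malliavin-trace condition $\int_0^T\int_0^T |D_s^H u(t)|\,|t-s|^{2H-2}\,ds\,dt<\infty$, so that the three integrals genuinely coincide. This is where I expect the main work to lie, and it should follow from the Lipschitz assumptions (i) and (iv) together with the regularity of the solutions $X_\epsilon,Z_\epsilon$ guaranteed by Theorem~3.1.4 and Remark~3.1.5 of \cite{Mishura}. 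Once this regularity is in place, the forward and backward cases are not merely analogous to the symmetric case but term-by-term identical, so no new estimate beyond Lemma~2 is needed and both conclusions follow at once.
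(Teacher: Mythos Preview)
Your proposal is correct and follows essentially the same route as the paper: both reduce the forward/backward case to the symmetric case by invoking the coincidence of the three pathwise integrals (Remarks~1 and~3), then repeat the splitting and estimates of Theorems~1 and~2 verbatim, with the diffusion terms handled via Lemma~2 and the conclusion obtained from Gronwall--Bellman and Chebyshev--Markov. The paper's proof simply rewrites the same bounds with new constant labels $L_{ij}$ in place of $K_{ij}$ (and in fact silently switches between $d^{-}$ and $d^{\circ}$ mid-argument), so your explicit identification of the integral coincidence as the key structural fact is, if anything, a cleaner presentation of the same idea; your cautionary remark about verifying the Malliavin-regularity hypotheses of Remarks~1 and~3 is a point the paper does not address.
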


\begin{proof}\\
Due to the Theorem $1$ and Theorem $2$, this proof is the similar to the process of SDEs with the symmetric integral.\\
We regard the forward integral of SDEs as an example.
\begin{eqnarray*}
 X_\epsilon(t)- Z_\epsilon(t) = \epsilon^{2H} \int_0^t
[b(s,X_\epsilon(s)) -\bar{b}(Z_\epsilon(s))] ds  + \\
\quad \quad\quad \epsilon^{H} \int_0^t [\sigma
    (s,X_\epsilon(s)) - \bar{\sigma}
    (Z_\epsilon(s))]
    d^{-} B^H(s),
\end{eqnarray*}
Then we can obtain
\begin{eqnarray*}
& & |X_\epsilon(t)- Z_\epsilon(t)|^2 \leq 2
 \epsilon^{4H} | \int_0^t
[b(s,X_\epsilon(s)) - \bar{b}(Z_\epsilon(s))] ds|^2 + \\
& &　\quad\quad\quad \quad\quad \quad\quad\quad 2 \epsilon^{2H} |\int_0^t [\sigma
    (s,X_\epsilon(s))-\bar{\sigma}
    (Z_\epsilon(s))]
    d^{-} B^H(s)|^2\\
& &　\quad\quad \quad\quad\quad\quad\quad  = F_1^2 + F_2^2,
\end{eqnarray*}
where  $0<t \leq u \in [0,T]$,$ F_i , i=1 , 2 $ denote the above terms respectively.
we get
\begin{eqnarray*}
& & F_1^2  \leq 4\epsilon^{4H} | \int_0^t
[b(s,X_\epsilon(s))-b(s,Z_\epsilon(s))] ds |^2 +\\
& &  \quad\quad 4\epsilon^{4H} |\int_0^t
[b(s,Z_\epsilon(s)) -\bar{b}(Z_\epsilon(s))] ds|^2\\
& & \quad  \leq F_{11}^2 + F_{12}^2,
\end{eqnarray*}
The similar technique yields
\begin{eqnarray*}
 |F_{11}|^2 \leq 4\epsilon^{4H}  t |\int_0^t
[b(s,X_\epsilon(s))- b(s,Z_\epsilon(s))]^2 ds|,
\end{eqnarray*}
\begin{eqnarray*}
 & &  \mathbb{E}|F_{11}|^2 \leq 4\epsilon^{4H} \mathbb{E} ( t \int_0^t
|b(s,X_\epsilon(s))- b(s,Z_\epsilon(s))|^2ds )\\
& &  \quad \quad \quad  \leq 4\epsilon^{4H} u L_{11} \int_0^u
\mathbb{E} (
|X_\epsilon(s_1)-Z_\epsilon(s_1)|^2 ) du,
\end{eqnarray*}
where $ L_{11} $ is a constant.
\begin{eqnarray*}
 & & \quad\mathbb{E} |F_{12}|^2 \leq  4\epsilon^{4H} \mathbb{E}( t^2
[\frac{1}{t}\int_0^t
|b(s,Z_\epsilon(s)) -\bar{b}(Z_\epsilon(s))|ds]^2)\\
& &  \quad\quad\quad\quad \leq 8\epsilon^{4H} u^2 L_{12},
\end{eqnarray*}
where $ L_{12} $ denotes a constant which may differ from $L_{11}$.
\\
We obtain
\begin{eqnarray*}
 \mathbb{E}|F_{1}|^2 \leq 4\epsilon^{4H} u L_{11} \int_0^u
\mathbb{E} (|X_\epsilon(s_1)-Z_\epsilon(s_1)|^2 ) du+ 8\epsilon^{4H} u^2 L_{12}.       \quad\quad        (Y_1)
\end{eqnarray*}
Consider the $ F_{2}$,
\begin{eqnarray*}
 & & \mathbb{E} |F_{2}|^2 \leq 4 \epsilon^{2H} \mathbb{E} (|\int_0^t [\sigma
    (s,X_\epsilon(s))-\sigma
    (s,Z_\epsilon(s))]d^\circ B^H(s)|^2)+\\
 & & \quad \quad \quad  \quad 4 \epsilon^{2H} \mathbb{E} (|\int_0^t [\sigma
    (s,Z_\epsilon(s))-\bar{\sigma}
    (Z_\epsilon(s))]d^\circ B^H(s)|^2) \\
& & \quad\quad\quad = F_{21}^2+F_{22}^2,
\end{eqnarray*}
By the previous conditions, we can get
\begin{eqnarray*}
 & & \quad F_{21}^2 \leq 4 \epsilon^{2H} \mathbb{E}(|\int_0^t
 M |X_\epsilon(s)-Z_\epsilon(s)|d^{-} B^H(s)|^2)\\
 & & \quad\quad\quad \leq 8 \epsilon^{2H} H u^{2H-1} L_{211}\int_0^u \mathbb{E}(|X_\epsilon(s_1)-Z_\epsilon(s_1)|^2)du + 16\epsilon^{2H} u^2 L_{212},
\end{eqnarray*}
\begin{eqnarray*}
 & & \quad F_{22}^2\leq 4 \epsilon^{2H} \mathbb{E}(|\int_0^u
 [\sigma(s,Z_\epsilon(s))-\bar{\sigma}(Z_\epsilon(s))]d^{-} B^H(s)|^2)\\
 & & \quad\quad\quad\leq 8\epsilon^{2H} u^{2H} H L_{221}+16 \epsilon^{2H} u^2 L_{222}.
\end{eqnarray*}
Therefore
\begin{eqnarray*}
 & & \mathbb{E} |F_{2}|^2  \leq 4 \epsilon^{2H} H u^{2H-1} L_{211}\int_0^u \mathbb{E}(|X_\epsilon(s_1)-Z_\epsilon(s_1)|^2) du + \\
& & \quad\quad\quad\quad 16 \epsilon^{2H} u^2 L_{212}+8 \epsilon^{2H} u^{2H} H L_{221}+16 \epsilon^{2H} u^2 L_{222}.\quad\quad\quad\quad\quad\quad (Y_2)
\end{eqnarray*}
Considering $(Y_{1})$ and $(Y_{2})$, one arrives at
\begin{eqnarray*}
& &  \mathbb{E} ( |X_\epsilon(t)-Z_\epsilon(t)|^2) \leq (4 \epsilon^{4H} u L_{11}+8\epsilon^{2H} H u^{2H-1}L_{211})\int_0^u
 \mathbb{E} ( |X_\epsilon(s_1)-Z_\epsilon(s_1)|^2) du + \\
 & & \quad 8 \epsilon^{4H} u^2 L_{12} + 16\epsilon^{2H} u^2 L_{212}+ 8\epsilon^{2H} u^{2H} H L_{221}+16 \epsilon^{2H} u^2 L_{222}.
\end{eqnarray*}

The discussions that follow are same to the process of proofs to  Theorem $1$ and Theorem $2$.
\end{proof}
\begin{remark} That is to say, by Theorem $1$, Theorem $2$ and Theorem $3$, we can get the same results for three types of pathwise integrals of SDEs.
\end{remark}

\section{\bf Examples}

Through the above discussion, we have established an averaging principle for the SDEs (6) with fractional Brownian motion. For Eq.(7) we
can define the standard SDEs and the averaged SDEs respectively
\begin{equation}
dX_\epsilon(t) = \epsilon^{2H} b(t, X_\epsilon(t)) dt +
\epsilon^H \sigma(t, X_\epsilon(t))d^\circ B^H(t),
\end{equation}
\begin{equation}
dZ_\epsilon(t) = \epsilon^{2H} \bar{b}(Z_\epsilon(t)) dt +
\epsilon^H \bar{\sigma}(Z_\epsilon(t))d^\circ B^H(t).
\end{equation}
with the same initial condition\\
\begin{eqnarray*}
  X_\epsilon(0) = Z_\epsilon(0) = X_0.
 \end{eqnarray*}

Assume that the conditions of {\bf\textit{Theorem 1}} are satisfied for $ b, \sigma $ , and
the similar conditions (C1)-(C2) are satisfied for
$ \bar{b}, \bar{\sigma} $. Then the following averaging principle
holds
\begin{eqnarray*}
& & \mathbb{E}(
|X_\epsilon(t)-Z_\epsilon(t)|^2)  \leq \delta_1 ,\\
& & \lim_{\epsilon\rightarrow 0} \mathbb{P} ( |X_\epsilon(t)-Z_\epsilon(t)|>\delta_2)=0.
 \end{eqnarray*}
where the  constants $ L , \beta, \epsilon, \delta_1 , \delta_2 $ are
the same as in {\bf\textit{Theorem 1 and Theorem 2}}.

Now we present two examples to demonstrate the procedure of the averaging
principle.
\begin{example}

 Consider the following SDEs driven by fractional Brownian motion :
\begin{equation} \label{Ex1}
dX_\epsilon = - 2\epsilon^{2H} \lambda X_\epsilon \sin^2(t) dt + \epsilon^H  d^\circ B^H(t),
\end{equation}
with initial condition $ X_\epsilon(0)=X_0 $ and $
\mathbb{E}|X_0|^2<\infty$, where $ b(t, X_\epsilon)= -2 \lambda
X_\epsilon  \sin^2(t),  \sigma(t, X_\epsilon)=1 $, and $ \lambda $ is a
positive constant, $ B^H(t) $ is a fractional Brownian  motion.
Then
\begin{equation*}
  \bar{b}(X_\epsilon) = \frac{1}{\pi}\int_0^{\pi} b(t, X_\epsilon)dt = - \frac{1}{2} \lambda X_\epsilon, \;\;\;\;\; \bar{\sigma}(
X_\epsilon)=1,
\end{equation*}
and define a new averaged SDE
\begin{equation*}
dZ_\epsilon =\epsilon^{2H} \bar{b}(Z_\epsilon) dt +
\epsilon^{H}\bar{\sigma}(Z_\epsilon)d^\circ B^H(t),
\end{equation*}

namely,
\begin{equation} \label{Ex1A}
dZ_\epsilon = - \frac{1}{2}\epsilon^{2H} \lambda Z_\epsilon dt + \epsilon^{H}
d^\circ B^H(t),
\end{equation}
Obviously, $ Z_\epsilon(t) $ is the well-known Ornstein-Uhlenbeck
process, and the solution can be obtained as :
\begin{equation*}
Z_\epsilon(t) =\exp(-\frac{1}{2}\epsilon \lambda t) X_0 + \sqrt{\epsilon}
\int_0^t \exp(-\frac{1}{2}\epsilon \lambda(t-s)) d^\circ B^H(t).
\end{equation*}

Because all the conditions (i)--(ii) and (C1)--(C2) are satisfied for functions
$ b, \sigma , \bar{b}, \bar{\sigma} $ in SDEs (6),(7), thus
{\bf\textit{Theorem 1}} and {\bf\textit{Theorem 2}} hold . That is,
\begin{eqnarray*}
 \mathbb{E}(|X_\epsilon(t)-Z_\epsilon(t)|^2)  \leq \delta_1,
 \end{eqnarray*}
and as $ \epsilon \rightarrow 0 $ ,
\begin{eqnarray*}
X_\epsilon(t) \rightarrow Z_\epsilon(t)  \quad \text{in
probability.}
 \end{eqnarray*}

\begin{figure}[th]
\begin{center}
\begin{minipage}[c]{0.5\textwidth}
\centerline{\psfig{file=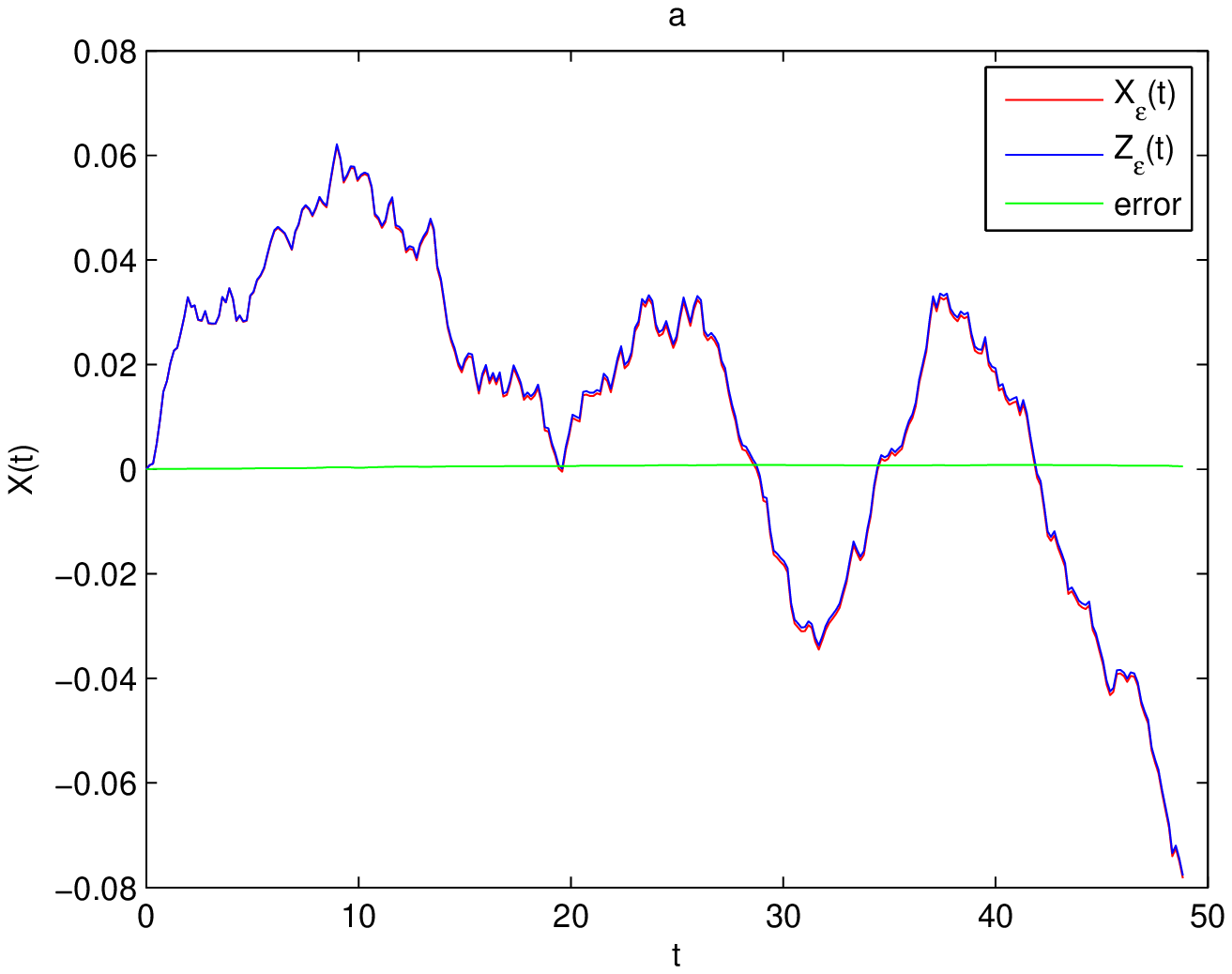,width=6.7cm}}
\vspace*{8pt}
\end{minipage}%
\begin{minipage}[c]{0.5\textwidth}
\centerline{\psfig{file=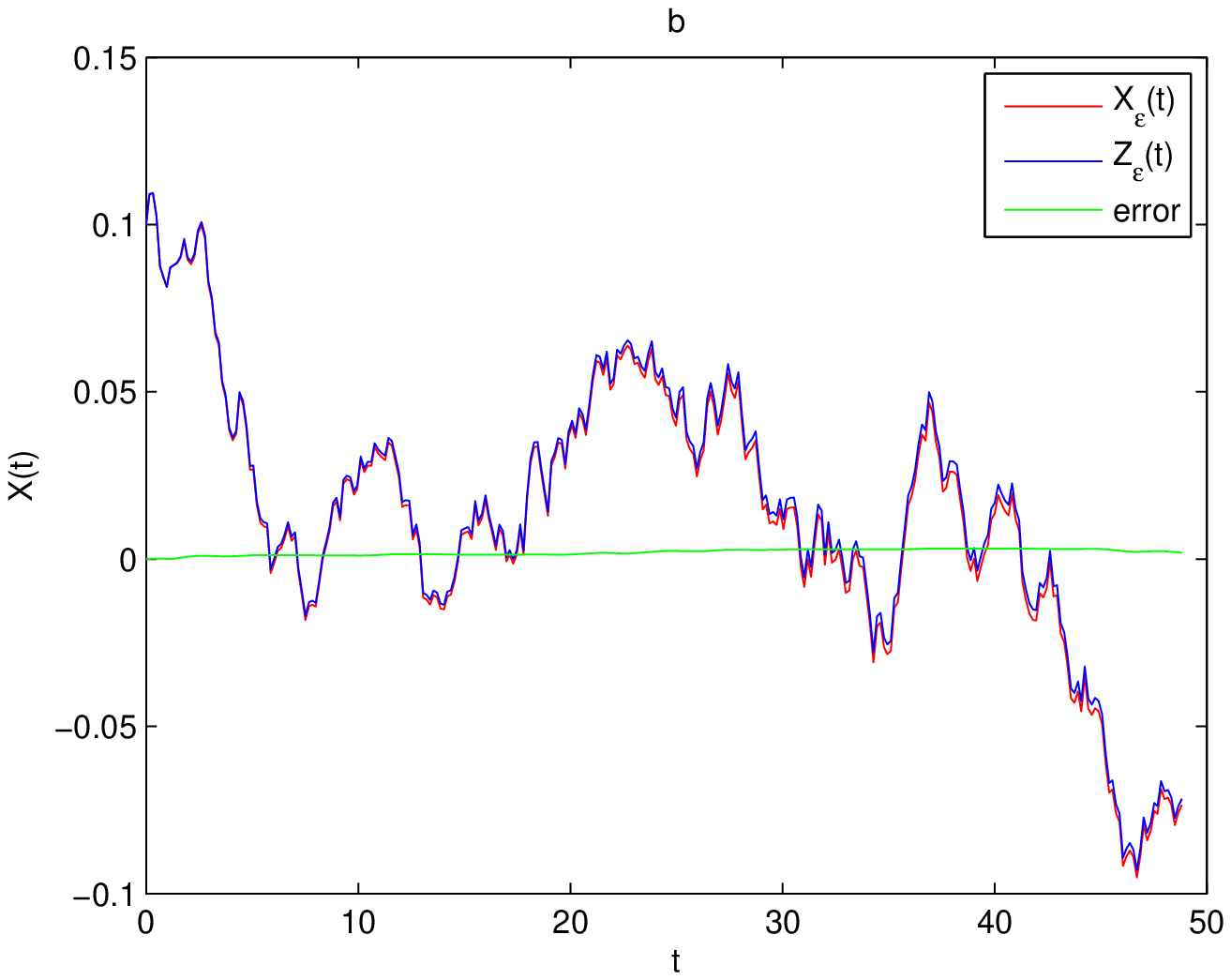,width=6.7cm}}
\end{minipage}
\begin{minipage}[c]{0.5\textwidth}
\centerline{\psfig{file=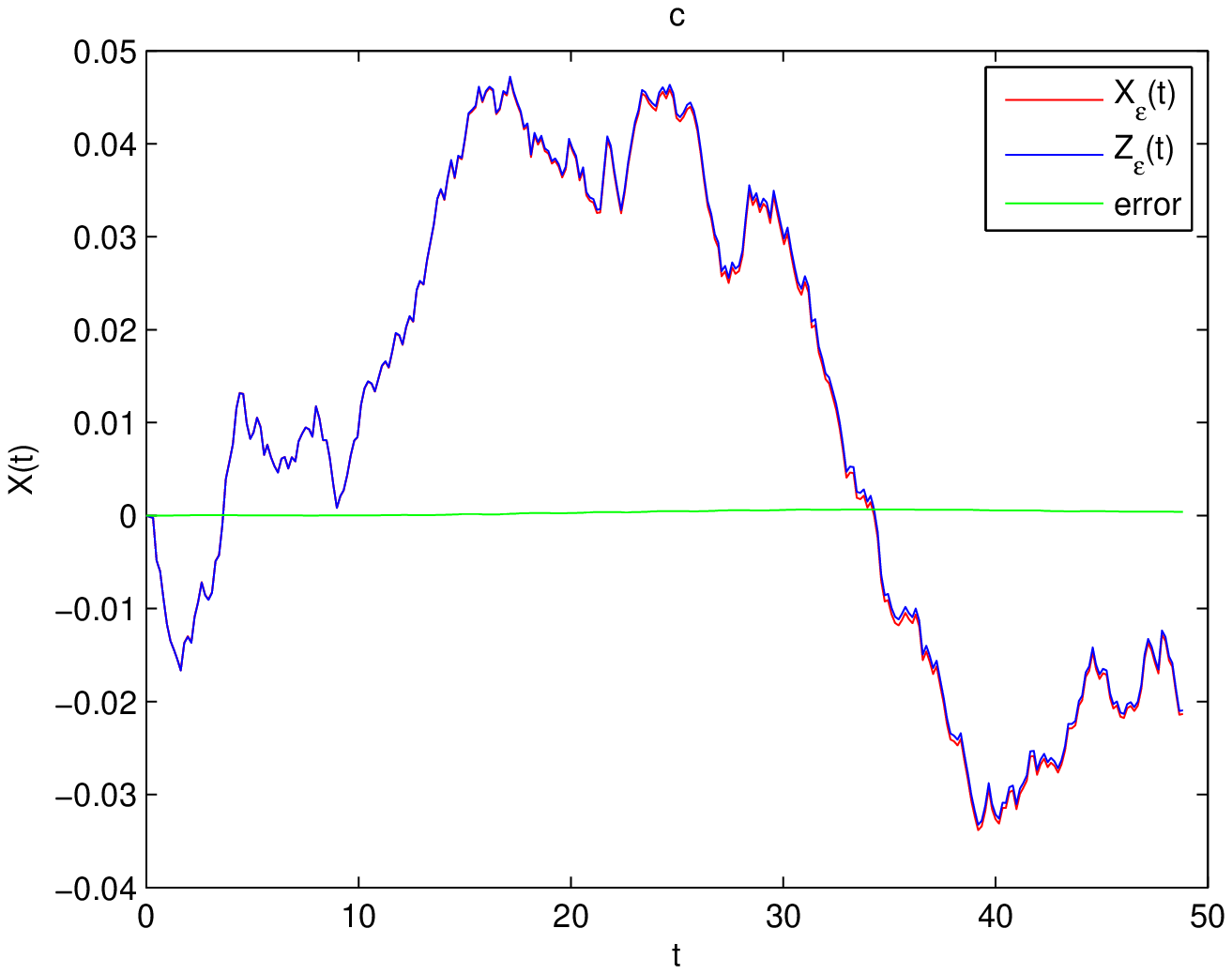,width=6.7cm}}
\end{minipage}%
\begin{minipage}[c]{0.5\textwidth}
\centerline{\psfig{file=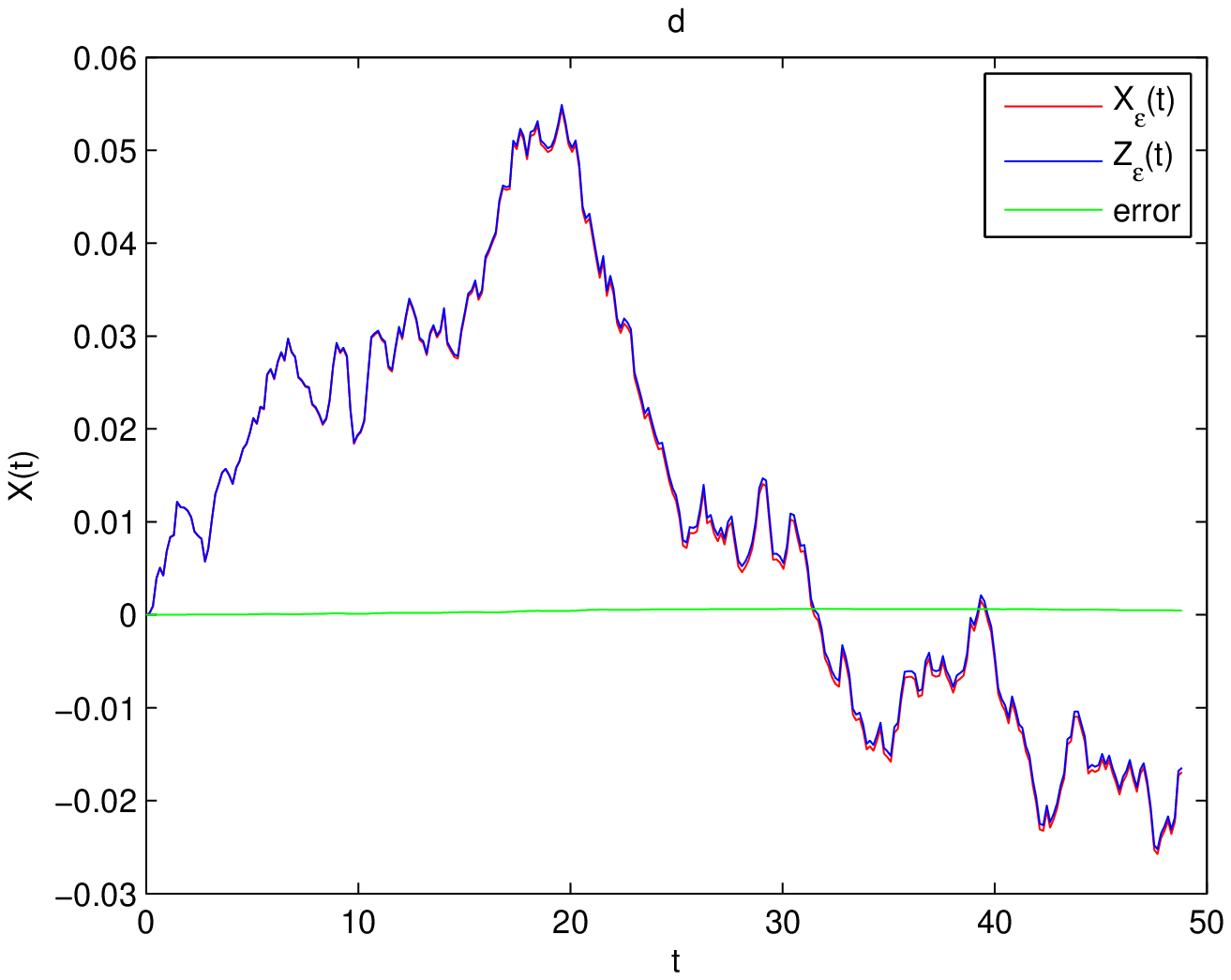,width=6.7cm}}
\end{minipage}
\renewcommand{\figurename}{Figure}
\caption{Comparison of the exact solution $X_\epsilon(t)$ with the
averaged solution $Z_\epsilon(t)$ for equations Eq.(14) and
Eq.(15)  (a) $X_0=0.0,\lambda=0.2,\epsilon=0.045,H=0.75 $ (b)
$X_0=0.1,\lambda=0.2,\epsilon=0.045,H=0.55$ (c) $X_0=0.1,\lambda=0.4,\epsilon=0.01,H=0.6 $ (d) $X_0=0.0,\lambda=0.4,\epsilon=0.02,H=0.7 $}  \label{Fig1}
\end{center}
\end{figure}
\end{example}

Now we carry out the numerical simulation to get the solutions of
\eqref{Ex1} and \eqref{Ex1A} under conditions of
$X_0=0.0,\lambda=0.2,\epsilon=0.045,H=0.75$ ,
$X_0=0.1,\lambda=0.2,\epsilon=0.045,H=0.85 $ ,$X_0=0.1,\lambda=0.4,\epsilon=0.01,H=0.6$,
$X_0=0.0,\lambda=0.4,\epsilon=0.02,H=0.7 $ respectively. Figure 1 $(a) \sim (d)$ show the comparison of exact solution $X_\epsilon(t)$ with
averaged solution $Z_\epsilon(t)$ for equations \eqref{Ex1} and
\eqref{Ex1A}. One can find a good agreement between solutions of
the original equation and the averaged equation.

\begin{example}

Consider the following SDE with the fractional Gaussian noise :
\begin{equation}\label{ex2}
dX_\epsilon = -\epsilon^{2H} dt + \epsilon ^H \cos^2 (t) \lambda d^\circ B^H(t),
\end{equation}

here we denote  $X(0)=X_0$ as the
initial condition with $\mathbb{E}|X(0)|^2<\infty$.

Here $b(t,X_\epsilon)=-1$, and $ \sigma(t, X_\epsilon)=\cos^2 (t) \lambda $.
Now we define a new (averaged) SDE as \\
\begin{equation}\label{ex2b}
dZ_\epsilon = -\epsilon^{2H} dt + \frac{3}{4} \lambda \epsilon^H d^\circ B^H(t).
\end{equation}

Obviously, all conditions in
{\bf \textit{Theorem 1 and Theorem 2}} are satisfied for the averaged SDE (17), so we can use
the solution $Z_\epsilon(t)$ to approximate the original solution
$X_\epsilon(t)$ to SDE (16), and the convergence in mean
square and in probability will be assured.

\begin{figure}[th]
\begin{center}
\begin{minipage}[c]{0.5\textwidth}
\centerline{\psfig{file=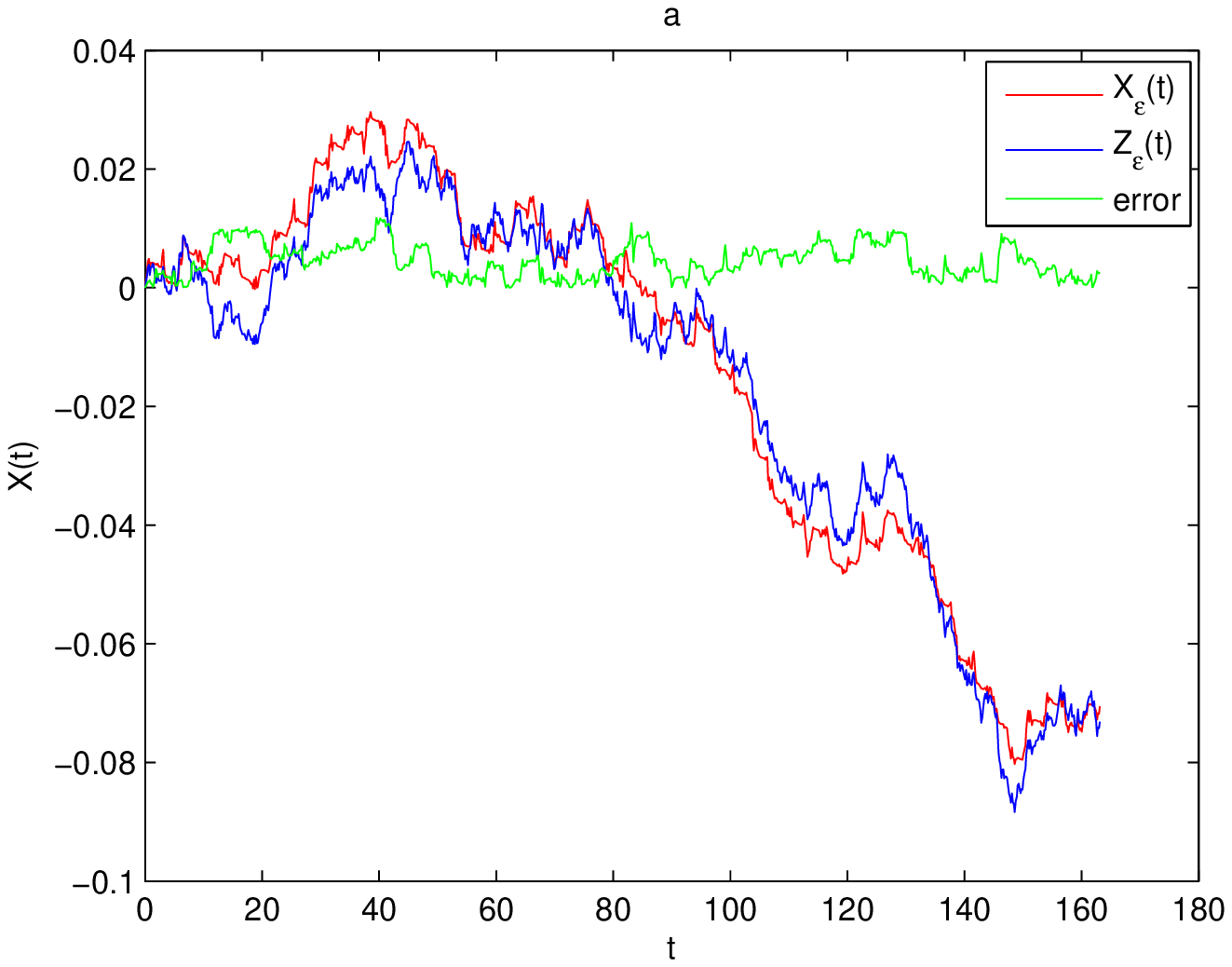,width=6.7cm}}
\vspace*{8pt}
\end{minipage}%
\begin{minipage}[c]{0.5\textwidth}
\centerline{\psfig{file=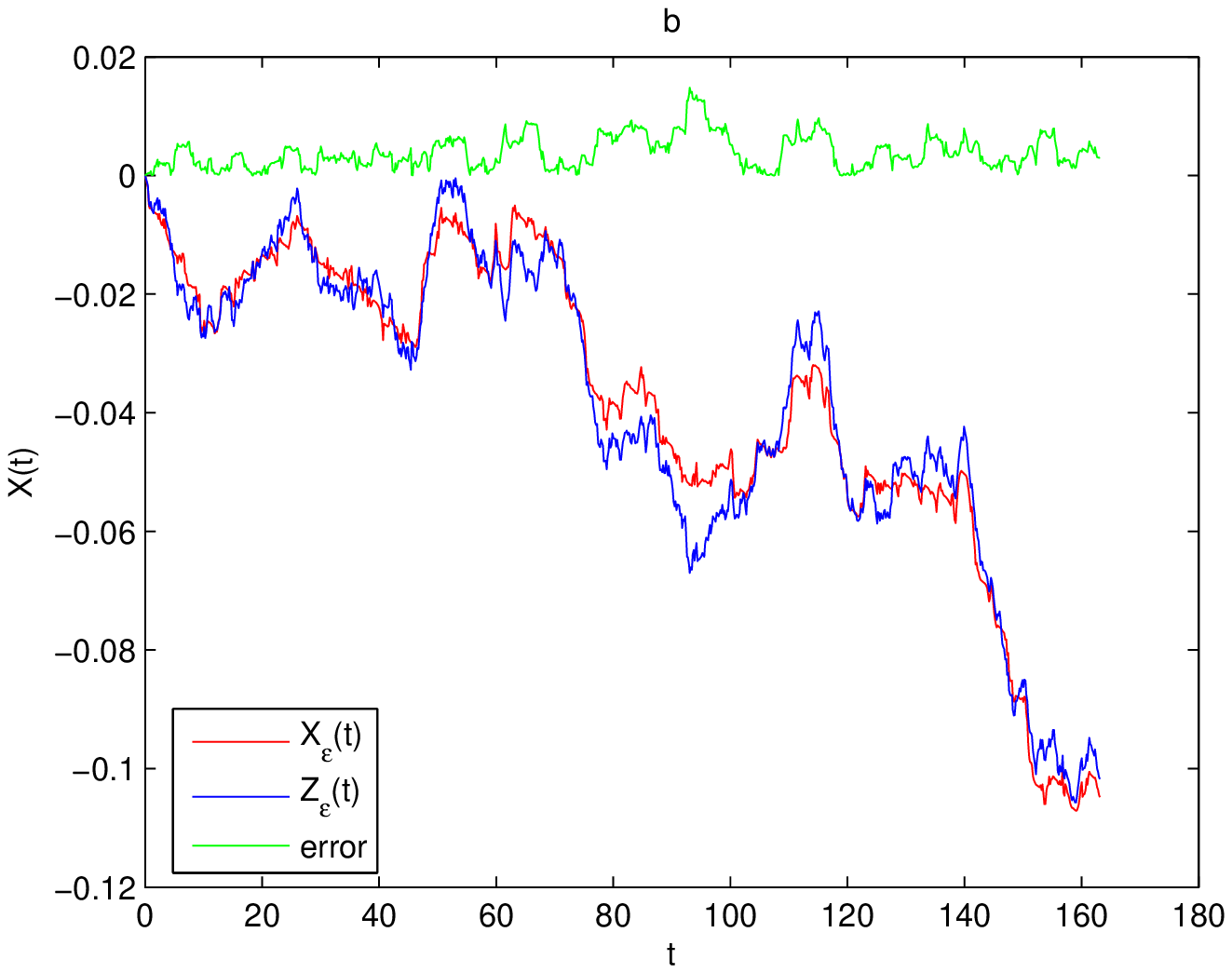,width=6.7cm}}
\end{minipage}
\begin{minipage}[c]{0.5\textwidth}
\centerline{\psfig{file=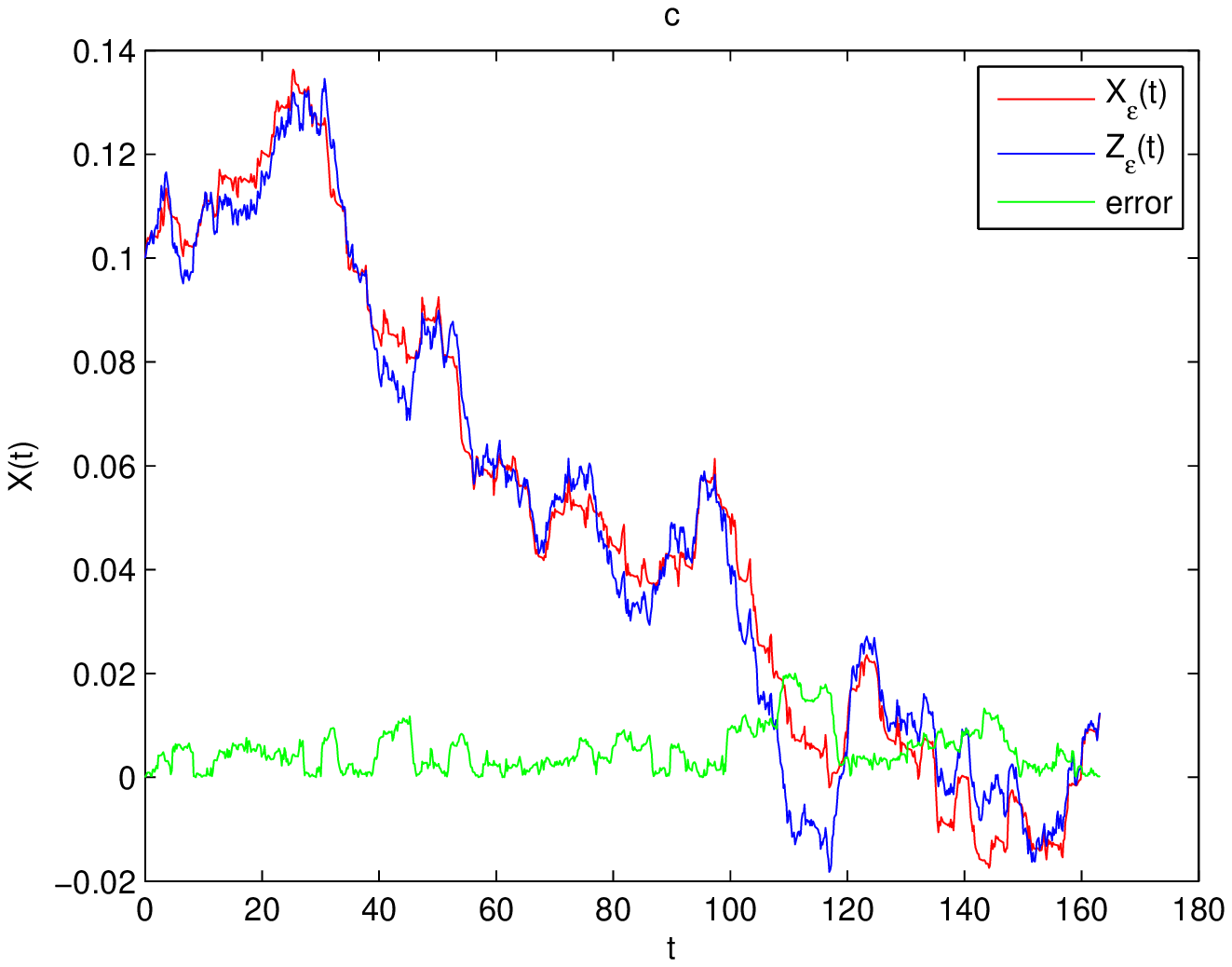,width=6.7cm}}
\end{minipage}%
\begin{minipage}[c]{0.5\textwidth}
\centerline{\psfig{file=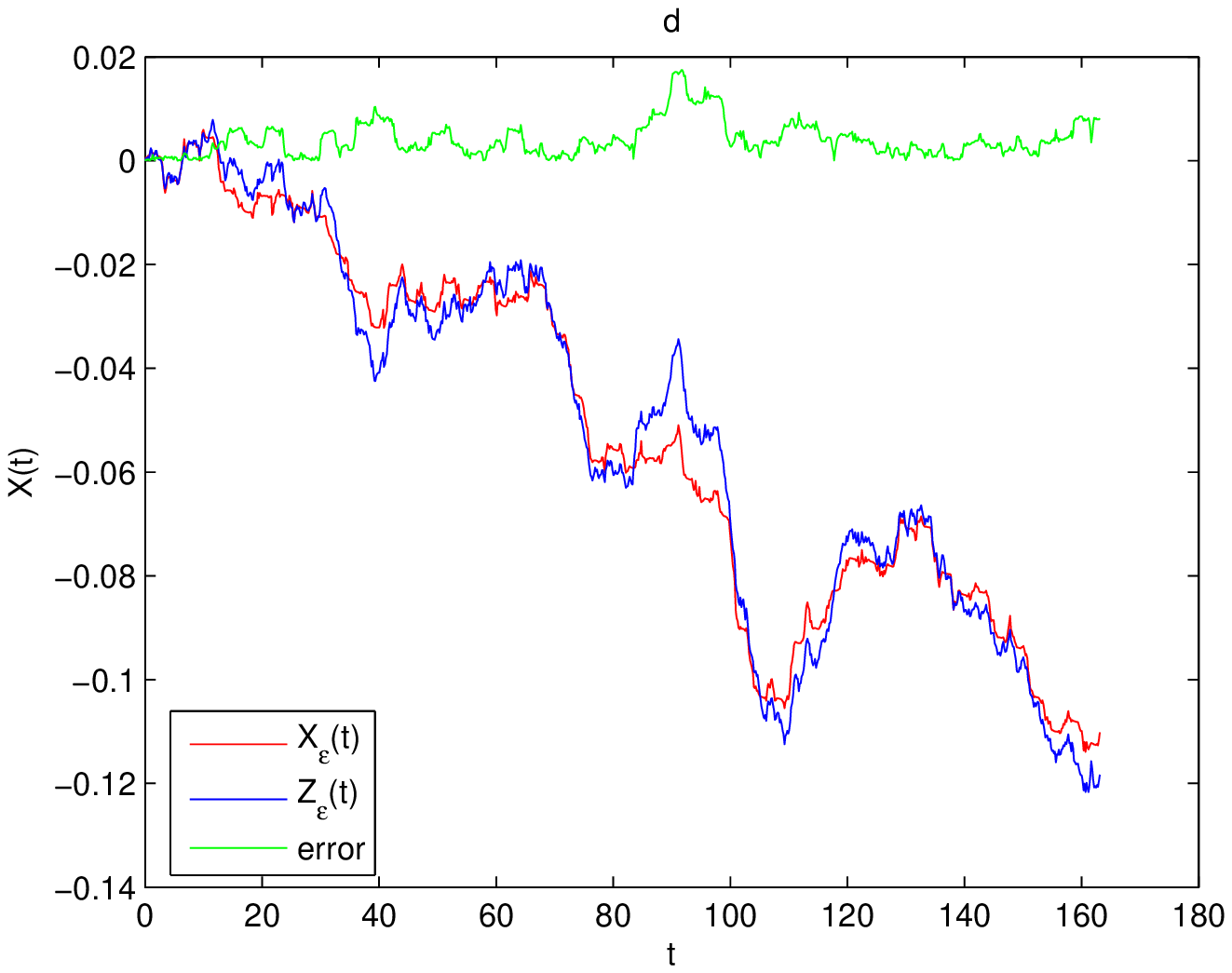,width=6.7cm}}
\end{minipage}
\renewcommand{\figurename}{Figure}
\caption{Comparison of the exact solution $X_\epsilon(t)$ with the
averaged solution $Z_\epsilon(t)$ for equations(17).(a) $X_0=0.0,\lambda=2.0,\epsilon=0.001,H=0.55 $ (b)
$X_0=0.0,\lambda=2.0,\epsilon=0.0045,H=0.65$ (c)$X_0=0.1,\lambda=3.0,\epsilon=0.002,H=0.6$ (d)$X_0=0.0,\lambda=3.0,\epsilon=0.002,H=0.7$}\label{Fig2}
 \label{Figure 2}
\end{center}
\end{figure}
\end{example}

\noindent  {\bf\large Acknowledgments}\\

This work was supported by the NSF of China (Grant Nos. 10972181, 11102157),
Program for NCET, the Shaanxi Project for Young New Star in Science and
Technology, NPU Foundation for Fundamental Research and SRF for ROCS, SEM.
We thank   Ilya Pavlyukevich for   valuable discussions, and referees for their helpful comments.

\end{document}